\newtheorem{theorem}{Theorem}
\theoremstyle{plain}
\newtheorem{corollary}{Corollary}
\newtheorem{lemma}{Lemma}
\newtheorem{proposition}{Proposition}
\newtheorem{remark}{Remark}
\numberwithin{equation}{section}
\begin{document}
\title[Some sharp Wilker type inequalities]{Some sharp Wilker type
inequalities and their applications}
\author{Zheng-Hang Yang}
\address{Power Supply Service Center, Zhejiang Electric Power Corporation
Research Institute, Hangzhou City, Zhejiang Province, 310009, China}
\email{yzhkm@163.com}
\date{April 6, 2013}
\subjclass[2010]{Primary 26D05, 33B10; Secondary 26A48, 26D15, }
\keywords{Wilker inequality, trigonometric function, hyperbolic function,
mean}
\thanks{This paper is in final form and no version of it will be submitted
for publication elsewhere.}

\begin{abstract}
In this paper, we prove that for fixed $k\geq 1$, the Wilker type inequality 
\begin{equation*}
\frac{2}{k+2}\left( \frac{\sin x}{x}\right) ^{kp}+\frac{k}{k+2}\left( \frac{%
\tan x}{x}\right) ^{p}>1
\end{equation*}%
holds for $x\in \left( 0,\pi /2\right) $ if and only if $p>0$ or $p\leq -%
\frac{\ln \left( k+2\right) -\ln 2}{k\left( \ln \pi -\ln 2\right) }$. It is
reversed if and only if $-\frac{12}{5\left( k+2\right) }\leq p<0$. Its
hyperbolic version holds for $x\in \left( 0,\infty \right) $ if and only if $%
p>0$ or $p\leq -\frac{12}{5\left( k+2\right) }$. And, for fixed $k<-2$, the
hyperbolic version is reversed if and only if $p<0$ or $p\geq -\frac{12}{%
5\left( k+2\right) }$. Our results unify and generalize some known ones.

.
\end{abstract}

\maketitle

\section{Introduction}

Wilker \cite{Wilker.AMM.96.1.1989} proposed two open problems, the first of
which states that if $x\in \left( 0,\pi /2\right) $ then 
\begin{equation}
\left( \frac{\sin x}{x}\right) ^{2}+\frac{\tan x}{x}>2,  \label{W}
\end{equation}%
which was proved by Sumner et al. in \cite{Sumner.AMM.98.1991}.

Wilker inequality (\ref{W}) and the second one have attracted great interest
of many mathematicians and have produced a batch of Wilker type ones by
various generalizing and improving as well as different methods and ideas
(see \cite{Baricz.JMI.2.3.2008}, \cite{Chen.in press}, \cite%
{Chen.JIA.accepted}, \cite{Mortitc.MIA.14.3.2011}, \cite%
{Neuman.MIA.13.4.2010}, \cite{Neuman.MIA.15.2.2012}, \cite{Wu.39(2009)}, 
\cite{Wu.AML.2011}, \cite{Wu.ITSF.18.2007}, \cite{Wu.ITSF.19.2008}, \cite%
{Zhang.MIA.11.2008}, \cite{Zhu.MIA.10.2007}, \cite{Zhu.AAA.2009}, \cite%
{Zhu.CMA.58.2009}, \cite{Zhu.JIA.2010.130821} and related references
therein).

In \cite{Wu.ITSF.18.2007}, Wu and Srivastava established another Wilker type
inequality%
\begin{equation}
\left( \frac{x}{\sin x}\right) ^{2}+\frac{x}{\tan x}>2\text{ \ for }x\in
\left( 0,\pi /2\right) ,  \label{Wu1}
\end{equation}%
and proved a weighted and exponential generalization of Wilker inequality.

\noindent \textbf{Theorem Wu (\cite[Theorem 1]{Wu.ITSF.18.2007}).} \emph{Let 
}$\lambda >0,%
\mu
>0$\emph{\ and }$p\leq 2q%
\mu
/\lambda $.\emph{\ If }$q>0$\emph{\ or }$q\leq \min \left( -1,-\lambda /%
\mu
\right) $\emph{, then }%
\begin{equation}
\frac{\lambda }{\lambda +%
\mu
}\left( \frac{\sin x}{x}\right) ^{p}+\frac{%
\mu
}{\lambda +%
\mu
}\left( \frac{\tan x}{x}\right) ^{q}>1  \label{Wu2}
\end{equation}%
\emph{holds for }$x\in \left( 0,\pi /2\right) $\emph{.}

As an application of the inequality (\ref{Wu2}), an open problem posed by
the S\'{a}dor--Bencze in \cite{Sandor.RGMIA.8.3.2005} was solved and
improved. Recently, the inequality (\ref{Wu2}) and all results in \cite%
{Wu.ITSF.18.2007} were extended in \cite{Baricz.JMI.2.3.2008} to Bessel
functions. A hyperbolic version of Theorem Wu has been presented in \cite%
{Wu.AML.2011} very recently.

In 2009, Zhu \cite{Zhu.AAA.2009}\textbf{\ }gave another exponential
generalization of Wilker inequality (\ref{W}) as follows.

\noindent \textbf{Theorem Zh1 (\cite[Theorem 1.1, 1.2]{Zhu.AAA.2009}).} 
\emph{Let }$0<x<\pi /2$\emph{. Then the inequalities}%
\begin{equation}
\left( \frac{\sin x}{x}\right) ^{2p}+\left( \frac{\tan x}{x}\right)
^{p}>\left( \frac{x}{\sin x}\right) ^{2p}+\left( \frac{x}{\tan x}\right)
^{p}>2  \label{Zhwt}
\end{equation}%
\emph{hold if }$p\geq 1$,\emph{\ while the first one in (\ref{Zhwt}) holds
if and only if }$p>0.$

\noindent \textbf{Theorem Zh2 (\cite[Theorem 1.3, 1.4]{Zhu.AAA.2009}).} 
\emph{Let }$x>0$\emph{. Then the inequalities}%
\begin{equation}
\left( \frac{\sinh x}{x}\right) ^{2p}+\left( \frac{\tanh x}{x}\right)
^{p}>\left( \frac{x}{\sinh x}\right) ^{2p}+\left( \frac{x}{\tanh x}\right)
^{p}>2  \label{Zhwh}
\end{equation}%
\emph{hold if }$p\geq 1$\emph{, while the first one in (\ref{Zhwh}) holds if
and only if }$p>0.$

In the end of the same paper, Zhu posed two open problems: find the
respective largest range of $p$ such that the inequalities (\ref{Zhwt}) and (%
\ref{Zhwh}) hold. They have been solved by Mateji\v{c}ka in \cite%
{Matejicka.IJOPCM.4.1.2011}.

Another inequality associated with Wilker one is the following 
\begin{equation}
2\frac{\sin x}{x}+\frac{\tan x}{x}>3  \label{H}
\end{equation}%
for $x\in \left( 0,\pi /2\right) $, which is known as Huygens inequality 
\cite{Huygens.1888-1940}. The following refinement of Huyegens inequality is
due to Neuman and S\'{a}ndor \cite{Neuman.MIA.13.4.2010}:%
\begin{equation}
2\frac{\sin x}{x}+\frac{\tan x}{x}>2\frac{x}{\sin x}+\frac{x}{\tan x}>3,
\label{Nueman}
\end{equation}%
where $x\in \left( 0,\pi /2\right) $. Very recently, the generalizations of (%
\ref{Nueman}), similar to (\ref{Zhwt}), has been derived by Neuman in \cite%
{Neuman.MIA.13.4.2010}. In \cite{Zhu.CMA.58.2009.a}, Zhu proved that for $%
x\in \left( 0,\pi /2\right) $ 
\begin{eqnarray}
\left( 1-\xi _{1}\right) \frac{\sin x}{x}+\xi _{1}\frac{\tan x}{x}
&>&1>\left( 1-\eta _{1}\right) \frac{\sin x}{x}+\eta _{1}\frac{\tan x}{x},
\label{Zhth1} \\
\left( 1-\xi _{2}\right) \frac{x}{\sin x}+\xi _{2}\frac{x}{\tan x}
&>&1>\left( 1-\eta _{2}\right) \frac{x}{\sin x}+\eta _{2}\frac{x}{\tan x}
\label{Zhth2}
\end{eqnarray}%
with the best constants $\xi _{1}=1/3$, $\eta _{1}=0$, $\xi _{2}=1/3$, $\eta
_{2}=1-2/\pi $. Later, he in \cite{Zhu.CMA.58.2009} generalized inequalities
(\ref{Zhth1}) and (\ref{Zhth2}) in exponential form, which is stated as
follows.

\noindent \textbf{Theorem Zh3 (\cite[Theorem 1.1, 1.2]{Zhu.AAA.2009})}. 
\emph{Let }$0<x<\pi /2$\emph{. Then we have}

\emph{(i) when }$p\geq 1$\emph{, the double inequality\ }%
\begin{equation}
\left( 1-\lambda \right) \left( \frac{x}{\sin x}\right) ^{p}+\lambda \left( 
\frac{x}{\tan x}\right) ^{p}<1<\left( 1-\eta \right) \left( \frac{x}{\sin x}%
\right) ^{p}+\eta \left( \frac{x}{\tan x}\right) ^{p}  \label{Zhth3}
\end{equation}%
\emph{holds if and only if }$\eta \leq 1/3$\emph{\ and }$\lambda \geq
1-\left( 2/\pi \right) ^{p}$\emph{.}

\emph{(ii) when }$0\leq p\leq 4/5$\emph{, the double inequality (\ref{Zhth3}%
) holds if and only if }$\lambda \geq 1/3$\emph{\ and }$\eta \leq 1-\left(
2/\pi \right) ^{p}$\emph{.}

\emph{(iii) when \thinspace }$p<0$\emph{, the second one in (\ref{Zhth3})
holds if and only if }$\eta \geq 1/3$\emph{.}

The hyperbolic version of inequalities (\ref{Nueman}) was given in \cite%
{Neuman.MIA.13.4.2010} by Neuman and S\'{a}ndor. In the same year, Zhu
showed that

\noindent \textbf{Theorem Zh4 (\cite[Theorem 4.1]{Zhu.JIA.2010.130821})}. 
\emph{Let }$x>0$\emph{. Then }

\emph{(i) when }$p\geq 4/5$\emph{, the double inequality}%
\begin{equation}
\left( 1-\lambda \right) \left( \frac{x}{\sinh x}\right) ^{p}+\lambda \left( 
\frac{x}{\tanh x}\right) ^{p}<1<\left( 1-\eta \right) \left( \frac{x}{\sinh x%
}\right) ^{p}+\eta \left( \frac{x}{\tanh x}\right) ^{p}  \label{Zhhh1}
\end{equation}%
\emph{holds if and only if }$\eta \geq 1/3$\emph{\ and }$\lambda \leq 0$%
\emph{;}

\emph{(ii) when }$p<0$\emph{, the inequality}%
\begin{equation}
\left( 1-\eta \right) \left( \frac{x}{\sinh x}\right) ^{p}+\eta \left( \frac{%
x}{\tanh x}\right) ^{p}>1  \label{Zhhh2}
\end{equation}%
\emph{holds if and only if }$\eta \leq 1/3$\emph{.}

The aim of this paper is to find the best $p$ such that the inequalities

\begin{eqnarray}
\frac{2}{k+2}\left( \frac{\sin x}{x}\right) ^{kp}+\frac{k}{k+2}\left( \frac{%
\tan x}{x}\right) ^{p} &>&1\text{ for\ }x\in \left( 0,\pi /2\right) ,
\label{Mt} \\
\frac{2}{k+2}\left( \frac{\sinh x}{x}\right) ^{kp}+\frac{k}{k+2}\left( \frac{%
\tanh x}{x}\right) ^{p} &>&1\text{ for }x\in \left( 0,\infty \right)
\label{Mh}
\end{eqnarray}%
or their reverse ones hold for certain fixed $k$ with $k\left( k+2\right)
\neq 0$. In Section 2, some useful lemmas are proved. necessary and
sufficient conditions for (\ref{Mt}) or its reverse and (\ref{Mh}) to hold
are presented in Section 3. Some applications of our main results given in
Section 4.

\section{Lemmas}

The following two lemmas is very important in the sequel.

\begin{lemma}
\label{Lemma AB/C}Let $A$, $B$ and $C$ be defined on $\left( 0,\pi /2\right) 
$ by 
\begin{eqnarray}
A &=&A\left( x\right) =\left( \cos x\right) \left( \sin x-x\cos x\right)
^{2}\left( x-\cos x\sin x\right) ,  \label{A} \\
B &=&B\left( x\right) =\left( x-\cos x\sin x\right) ^{2}\left( \sin x-x\cos
x\right) ,  \label{B} \\
C &=&C\left( x\right) =x\left( \sin ^{2}x\right) \left( -2x^{2}\cos x+x\sin
x+\cos x\sin ^{2}x\right) .  \label{C}
\end{eqnarray}%
Then for fixed $k\geq 1$ the $x\mapsto C\left( x\right) /\left( kA\left(
x\right) +B\left( x\right) \right) $ is increasing on $\left( 0,\pi
/2\right) $. Moreover, we have%
\begin{equation}
\tfrac{5}{12\left( k+2\right) }<\dfrac{C\left( x\right) }{kA\left( x\right)
+B\left( x\right) }<1.  \label{RAB/C}
\end{equation}
\end{lemma}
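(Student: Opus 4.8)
The plan is to reduce everything to the positivity of $u=\sin x-x\cos x$ and $v=x-\sin x\cos x$ on $\left( 0,\pi /2\right) $, together with the two identities $u'=x\sin x$ and $v'=2\sin ^{2}x$. First I would record the factorization
\[
kA+B=k\left( \cos x\right) u^{2}v+uv^{2}=uv\left( ku\cos x+v\right) ,
\]
which is strictly positive for every $k\geq 1$ since $u,v,\cos x>0$ there. Writing $C=x\left( \sin ^{2}x\right) w$ with $w=xu-\left( \cos x\right) \left( x^{2}-\sin ^{2}x\right) $, I would also check directly (e.g.\ from its Maclaurin series, which begins with $\tfrac{8}{45}x^{6}$) that $w>0$, so that $C>0$ and $f(x):=C/\left( kA+B\right) $ is a well-defined positive smooth function.

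For the monotonicity, differentiate logarithmically:
\[
\frac{d}{dx}\log f=\frac{C'}{C}-\frac{(kA+B)'}{kA+B}=\frac{k\left( C'A-CA'\right) +\left( C'B-CB'\right) }{C\left( kA+B\right) }.
\]
Setting $P=C'A-CA'$ and $Q=C'B-CB'$, the denominator is positive, so $f$ is increasing precisely when $kP+Q>0$. Since $P=-C^{2}\left( A/C\right) '$ and $Q=-C^{2}\left( B/C\right) '$, it suffices to prove that $A/C$ is strictly decreasing (i.e.\ $P>0$) and that $\left( A+B\right) /C$ is strictly decreasing (i.e.\ $P+Q>0$); granting these, for every $k\geq 1$ we get $kP+Q=(k-1)P+(P+Q)>0$, so $f$ is strictly increasing on $\left( 0,\pi /2\right) $. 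This is exactly where the hypothesis $k\geq 1$ enters: $B/C$ is in fact increasing (it runs from $5/6$ to $1$), so only when the decreasing part $A/C$ is weighted by a factor $k\geq 1$ does the combination decrease.

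The hard part is establishing $P>0$ and $P+Q>0$. I would clear the common trigonometric factors using $(\ref{A})$--$(\ref{C})$ and reduce each to the positivity on $\left( 0,\pi /2\right) $ of an explicit analytic function of $\sin x,\cos x,x$; the robust route is to expand into Maclaurin series (all convergent on the interval) and apply the monotonicity criterion for ratios of power series, i.e.\ to show that the Maclaurin coefficients of $P$ and of $P+Q$, after factoring out the common power of $x$, are nonnegative. The delicate point is that the two leading terms of $P=C'A-CA'$ cancel (both $A$ and $C$ vanish to order $x^{9}$, and the order-$x^{17}$ terms of $C'A$ and $CA'$ agree), so the sign is decided by higher-order coefficients; I expect this cancellation, together with the degree mixing introduced by the factor $w$ in $C$, to be the main obstacle.

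Finally, the bounds follow from the endpoint limits. As $x\to 0^{+}$ one has $u\sim x^{3}/3$, $v\sim 2x^{3}/3$ and $w\sim \tfrac{8}{45}x^{6}$, hence $C\sim \tfrac{8}{45}x^{9}$ and $kA+B\sim \tfrac{2(k+2)}{27}x^{9}$, so that
\[
\lim_{x\to 0^{+}}\frac{C}{kA+B}=\frac{12}{5\left( k+2\right) },
\]
while at $x=\pi /2$ we have $A=0$ and $B=C=\pi ^{2}/4$, giving $\lim_{x\to \left( \pi /2\right) ^{-}}f=1$. By the monotonicity just proved these are the sharp infimum and supremum of $f$, and since $\tfrac{12}{5\left( k+2\right) }>\tfrac{5}{12\left( k+2\right) }$ this yields $(\ref{RAB/C})$.
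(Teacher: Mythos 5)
Your structural reduction is sound: with $u=\sin x-x\cos x$, $v=x-\sin x\cos x$ you correctly factor $kA+B=uv\left( ku\cos x+v\right) >0$, your endpoint limits are right (the infimum is indeed $\tfrac{12}{5\left( k+2\right) }$, which also quietly corrects the constant $\tfrac{5}{12\left( k+2\right) }$ displayed in \eqref{RAB/C} --- the paper itself uses $\tfrac{12}{5\left( k+2\right) }$ later), and the splitting $kP+Q=\left( k-1\right) P+\left( P+Q\right) $ is a legitimate way to make the hypothesis $k\geq 1$ do its work. But the proof stops exactly where the lemma lives: you never establish $P>0$ or $P+Q>0$; you only announce a plan (expand in Maclaurin series and show the coefficients are nonnegative) and yourself flag the resulting cancellations as ``the main obstacle.'' That is a genuine gap, not a deferred routine verification. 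Worse, the plan is unlikely to work as stated, and the evidence is already inside your own write-up: the factor $w=x\sin x-2x^{2}\cos x+\cos x\sin ^{2}x$, whose positivity you also propose to ``check directly from its Maclaurin series,'' has series $\tfrac{8}{45}x^{6}-\tfrac{4}{105}x^{8}+\cdots $, so its coefficients are \emph{not} all nonnegative and a positive leading term proves nothing on all of $\left( 0,\pi /2\right) $. In fact $w=x^{2}\left( \cos x\right) \left( \left( \tfrac{\sin x}{x}\right) ^{2}+\tfrac{\tan x}{x}-2\right) $, so $w>0$ \emph{is} Wilker's inequality \eqref{W}; some such external input is unavoidable, and the same obstruction must be expected for $P$ and $P+Q$, which are built from $w$.

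For comparison, the paper closes precisely this gap with two short steps: it factors $C/\left( kA+B\right) =D_{1}D_{2}$ with $D_{1}=w/\left( uv\right) $ and $D_{2}=x\sin ^{2}x/\left( ku\cos x+v\right) $, quotes Zhu's Lemma 2.9 in \cite{Zhu.AAA.2009} for the fact that $D_{1}$ is positive and increasing (this is exactly equivalent to your claim $P+Q>0$, since $u\cos x+v=x\sin ^{2}x$ gives $\left( A+B\right) /C=1/D_{1}$), and then verifies $D_{2}^{\prime }>0$ by a one-line computation whose sign is $\left( k-1\right) $ times a factor that is again positive by Wilker's inequality --- structurally the same $\left( k-1\right) $-splitting as yours, but with both positivity inputs actually supplied. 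To repair your argument you would either import those two facts (Zhu's monotonicity lemma and \eqref{W}) at the corresponding points, or produce genuine proofs of $P>0$ and $P+Q>0$ by some method other than coefficient positivity; as written, the lemma is not proved.
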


\begin{proof}
Evidently, $A$, $B>0$ for $x\in \left( 0,\pi /2\right) $ due to $\left( \sin
x-x\cos x\right) >0$ and $\left( x-\cos x\sin x\right) =\left( 2x-\sin
2x\right) /2>0$, while $C>0$ because 
\begin{equation*}
\left( -2x^{2}\cos x+x\sin x+\cos x\sin ^{2}x\right) =x^{2}\left( \cos
x\right) \left( \left( \frac{\sin x}{x}\right) ^{2}+\frac{\tan x}{x}%
-2\right) >0
\end{equation*}%
by Wilker inequality (\ref{W}).

Denote $\left( kA+B\right) /C$ by $D$ and factoring yields 
\begin{eqnarray*}
D\left( x\right) &=&\tfrac{x\left( \sin ^{2}x\right) \left( -2x^{2}\cos
x+x\sin x+\cos x\sin ^{2}x\right) }{\left( \sin x-x\cos x\right) \left(
x-\cos x\sin x\right) \left( \left( 1-k\cos ^{2}x\right) x+\left( k-1\right)
\cos x\sin x\right) } \\
&=&\tfrac{-2x^{2}\cos x+x\sin x+\cos x\sin ^{2}x}{\left( \sin x-x\cos
x\right) \left( x-\cos x\sin x\right) }\times \tfrac{x\sin ^{2}x}{k\left(
\sin x-x\cos x\right) \cos x+\left( x-\cos x\sin x\right) } \\
&:&=D_{1}\left( x\right) \times D_{2}\left( x\right) .
\end{eqnarray*}%
It is known that the function $D_{1}$ (which is equal to $G$ in \cite[Proof
of Lemma 2.9]{Zhu.AAA.2009}) is positive and increasing on $\left( 0,\pi
/2\right) $ proved in \cite[Proof of Lemma 2.9]{Zhu.AAA.2009}, and it
remains to prove the function $D_{2}$ is also positive and increasing.
Clearly, $D_{2}\left( x\right) >0$, we only need to show that $D_{2}^{\prime
}\left( x\right) >0$ for $x\in \left( 0,\pi /2\right) $. Indeed,
differentiation and simplifying yield%
\begin{eqnarray*}
D_{2}^{\prime }\left( x\right) &=&\left( k-1\right) \left( \sin x\right) 
\frac{\left( -2x^{2}\cos x+\cos x\sin ^{2}x+x\sin x\right) }{\left( k\left(
\sin x-x\cos x\right) \cos x+\left( x-\cos x\sin x\right) \right) ^{2}} \\
&=&\frac{\left( k-1\right) x^{2}\sin x\cos x}{\left( k\left( \sin x-x\cos
x\right) \cos x+\left( x-\cos x\sin x\right) \right) ^{2}}\left( \left( 
\frac{\sin x}{x}\right) ^{2}+\frac{\tan x}{x}-2\right) ,
\end{eqnarray*}%
which is clearly positive due to Wilker inequality (\ref{W}). Hence, $%
C/\left( kA+B\right) $ is increasing on $\left( 0,\pi /2\right) $, and it is
deduced that 
\begin{equation*}
\tfrac{5}{12\left( k+2\right) }=\lim_{x\rightarrow 0}\dfrac{C\left( x\right) 
}{kA\left( x\right) +B\left( x\right) }<D\left( x\right) <\lim_{x\rightarrow
\pi /2-}\dfrac{C\left( x\right) }{kA\left( x\right) +B\left( x\right) }=1.
\end{equation*}

This completes the proof.
\end{proof}

\begin{lemma}
\label{Lemma EF/G}Let $U$, $V$ and $W$ be defined on $\left( 0,\infty
\right) $ by%
\begin{eqnarray}
E &=&E\left( x\right) =\left( \cosh x\right) \left( \sinh x-x\cosh x\right)
^{2}\left( x-\cosh x\sinh x\right) ,  \label{E} \\
F &=&F\left( x\right) =\left( \sinh x-x\cosh x\right) \left( x-\cosh x\sinh
x\right) ^{2},  \label{F} \\
G &=&G\left( x\right) =x\left( \sinh ^{2}x\right) \left( 2x^{2}\cosh
x-x\sinh x-\cosh x\sinh ^{2}x\right) .  \label{G}
\end{eqnarray}%
Then for fixed $k\geq 1$ (or $k<-2$) the function $x\mapsto G\left( x\right)
/\left( kE\left( x\right) +F\left( x\right) \right) $ is decreasing
(increasing) on $\left( 0,\infty \right) $. Moreover, we have%
\begin{equation}
\min \left( 0,\dfrac{12}{5\left( k+2\right) }\right) <\dfrac{G(x)}{kE\left(
x\right) +F\left( x\right) }<\max \left( 0,\dfrac{12}{5\left( k+2\right) }%
\right) .  \label{REF/G}
\end{equation}
\end{lemma}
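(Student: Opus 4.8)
The plan is to run the argument of Lemma~\ref{Lemma AB/C} verbatim, trading circular functions for hyperbolic ones and bookkeeping the sign flips. First I would pin down the signs of $E,F,G$ on $(0,\infty)$. Because $\tanh x<x$ gives $\sinh x-x\cosh x<0$, and $\sinh 2x>2x$ gives $x-\cosh x\sinh x=(2x-\sinh 2x)/2<0$, the squared factor $(\sinh x-x\cosh x)^{2}$ is positive while the single factor $(x-\cosh x\sinh x)$ is negative; hence $E<0$ and $F<0$. For $G$ I would rewrite its last factor as
\[
2x^{2}\cosh x-x\sinh x-\cosh x\sinh ^{2}x=-x^{2}\cosh x\Big(\big(\tfrac{\sinh x}{x}\big)^{2}+\tfrac{\tanh x}{x}-2\Big),
\]
and invoke the case $p=1$ of the hyperbolic Wilker inequality (\ref{Zhwh}) to see that the bracket is positive, so $G<0$.

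The second step is the same factorisation. Since
\[
kE+F=(\sinh x-x\cosh x)(x-\cosh x\sinh x)\big[k\cosh x(\sinh x-x\cosh x)+(x-\cosh x\sinh x)\big],
\]
the quotient splits as $G/(kE+F)=D_{1}D_{2}$ with
\[
D_{1}=\frac{2x^{2}\cosh x-x\sinh x-\cosh x\sinh ^{2}x}{(\sinh x-x\cosh x)(x-\cosh x\sinh x)},\qquad
D_{2}=\frac{x\sinh ^{2}x}{k\cosh x(\sinh x-x\cosh x)+(x-\cosh x\sinh x)}.
\]
The factor $D_{1}$ is free of $k$: its numerator is negative by the previous step and its denominator is a product of two negative factors, so $D_{1}<0$. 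For $D_{2}$ I would record the small-$x$ behaviour of its denominator, namely $k\cosh x(\sinh x-x\cosh x)+(x-\cosh x\sinh x)\sim-\tfrac{k+2}{3}x^{3}$, which shows $D_{2}<0$ when $k\ge1$ and $D_{2}>0$ when $k<-2$.

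For the monotonicity of $D_{2}$ I would differentiate directly; using $\cosh^{2}x-\sinh^{2}x=1$ the numerator of the derivative collapses, giving
\[
D_{2}'(x)=\frac{(k-1)\,x^{2}\sinh x\cosh x\,\big((\sinh x/x)^{2}+\tanh x/x-2\big)}{\big[\,k\cosh x(\sinh x-x\cosh x)+(x-\cosh x\sinh x)\,\big]^{2}},
\]
so by (\ref{Zhwh}) the sign of $D_{2}'$ equals that of $k-1$: $D_{2}$ increases for $k\ge1$ and decreases for $k<-2$. The genuinely new ingredient is the monotonicity of the $k$-free factor $D_{1}$, which in Lemma~\ref{Lemma AB/C} was imported from \cite{Zhu.AAA.2009} for the circular case. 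Here I would show $-D_{1}$ is positive and decreasing by expanding its numerator and denominator as even power series beginning at $x^{6}$, checking that all coefficients are positive, and applying the monotone (power-series) form of l'Hospital's rule to the ratio of coefficient sequences. \emph{This is the step I expect to be the main obstacle}, since it demands control of an entire family of series coefficients rather than a single Wilker-type inequality; if a hyperbolic analogue of \cite[Lemma~2.9]{Zhu.AAA.2009} is available it could be cited instead.

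With both factors in hand the conclusion follows. For $k\ge1$, $D_{1}$ and $D_{2}$ are negative and increasing, so $-D_{1}$ and $-D_{2}$ are positive and decreasing and their product $G/(kE+F)=(-D_{1})(-D_{2})$ is decreasing; for $k<-2$, $D_{1}<0$ is increasing while $D_{2}>0$ is decreasing, whence $\big(G/(kE+F)\big)'=D_{1}'D_{2}+D_{1}D_{2}'>0$ and the quotient is increasing. Finally I would read off the bounds from the endpoints. The small-$x$ expansions give $\lim_{x\to0^{+}}D_{1}=-\tfrac45$ and $\lim_{x\to0^{+}}D_{2}=\tfrac{-3}{k+2}$, so $\lim_{x\to0^{+}}G/(kE+F)=\tfrac{12}{5(k+2)}$, while the dominant exponential terms give $\lim_{x\to\infty}D_{1}=0$ and $\lim_{x\to\infty}D_{2}=-1/k$, hence $\lim_{x\to\infty}G/(kE+F)=0$. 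Combined with the established monotonicity, these two limits yield exactly the two-sided bound (\ref{REF/G}).
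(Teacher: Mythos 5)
Your proposal follows the paper's own proof almost step for step: the same sign analysis of $E,F,G$ via the hyperbolic Wilker inequality (\ref{Zhwh}), the same factorisation of $G/(kE+F)$ into a $k$-free factor times a $k$-dependent factor (your $D_1,D_2$ are the paper's $H_1,H_2$ up to sign conventions), the same differentiation of the $k$-dependent factor with sign governed by $k-1$ and Wilker, the same combination of monotonicities in the two cases $k\geq 1$ and $k<-2$, and the same endpoint limits producing (\ref{REF/G}). Your displayed formula for $D_2'$ is correct; in fact the paper's printed version carries a typo (its denominator should be the square of $H_2$'s denominator, as in yours).

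The step you flag as the main obstacle --- monotonicity of the $k$-free factor $D_1$ --- is exactly the step the paper does not prove either: it cites Mateji\v{c}ka \cite[Proof of Lemma 2.2]{Matejicka.IJOPCM.4.1.2011}, where this quotient (the function $s$ there, equal to $-D_1$) is shown to be positive and decreasing on $\left( 0,\infty \right) $. So the hyperbolic analogue you hoped to invoke does exist, and with that citation your argument is complete; the unexecuted power-series route is not needed, and would be appreciably harder than you suggest, since the denominator's Maclaurin coefficients are a convolution and monotonicity of the coefficient ratio is not a routine check. One smaller repair: for $k<-2$, the small-$x$ asymptotics $v(x)\sim -\tfrac{k+2}{3}x^{3}$ of $D_2$'s denominator only give $D_2>0$ near $0$, not on all of $\left( 0,\infty \right) $. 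The global sign should instead be read off --- as the paper does, and as your own ingredients permit --- from the monotonicity of $D_2$ together with its two positive limits $-3/(k+2)$ at $0$ and $-1/k$ at $\infty $; strictly one should also rule out a zero of the denominator, which follows from $-k>2$ and the hyperbolic Cusa inequality $3\sinh 2x<2x\left( \cosh 2x+2\right) $ (a point the paper glosses over as well). In short: same route as the paper, and correct modulo precisely the citation you anticipated.
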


\begin{proof}
It is easy to verify that $E$, $F<0$ for $x\in \left( 0,\infty \right) $ due
to 
\begin{eqnarray*}
\left( x-\cosh x\sinh x\right) &=&\left( 2x-\sinh 2x\right) /2<0, \\
\left( \sinh x-x\cosh x\right) &=&x\left( \frac{\sinh x}{x}-\cos x\right) <0.
\end{eqnarray*}%
While $G<0$ because 
\begin{equation*}
\left( 2x^{2}\cosh x-x\sinh x-\cosh x\sinh ^{2}x\right) =-x^{2}\left( \cosh
x\right) \left( \left( \frac{\sinh x}{x}\right) ^{2}+\frac{\tanh x}{x}%
-2\right) <0
\end{equation*}%
by Wilker inequality (\ref{Zhwh}).

Denote $G/\left( kE+F\right) $ by $H$ and factoring give 
\begin{eqnarray*}
H\left( x\right) &=&\tfrac{x\left( \sinh ^{2}x\right) \left( 2x^{2}\cosh
x-x\sinh x-\cosh x\sinh ^{2}x\right) }{\left( \cosh x\right) \left( \sinh
x-x\cosh x\right) ^{2}\left( x-\sinh x\cosh x\right) k+\left( \sinh x-x\cosh
x\right) \left( x-\sinh x\cosh x\right) ^{2}} \\
&=&\tfrac{-2x^{2}\cosh x+x\sinh x+\cosh x\sinh ^{2}x}{\left( x\cosh x-\sinh
x\right) \left( \sinh x\cosh x-x\right) }\times \tfrac{x\left( \sinh
^{2}x\right) }{\left( k\left( x\cosh x-\sinh x\right) \cosh x+\sinh x\cosh
x-x\right) } \\
&:&=H_{1}\left( x\right) \times H_{2}\left( x\right) .
\end{eqnarray*}%
Clearly, $H_{1}\left( x\right) >0$, and it has shown in \cite[Proof of Lemma
2.2]{Matejicka.IJOPCM.4.1.2011} that $H_{1}$ (that is, the function $s$, in 
\cite[Proof of Lemma 2.2]{Matejicka.IJOPCM.4.1.2011}) is decreasing on $%
\left( 0,\infty \right) $. In order to prove the monotonicity of $H$, we
also need to deal with the sign and monotonicity of $H_{2}$.

(i) Clearly, $H_{2}\left( x\right) >0$ for $k\geq 1$. And, we claim that $%
H_{2}$ is also decreasing on $\left( 0,\infty \right) $. Indeed,
differentiation and simplifying yield%
\begin{eqnarray*}
H_{2}^{\prime }\left( x\right) &=&-\left( k-1\right) \sinh x\frac{\left(
-2x^{2}\cosh x+\cosh x\sinh ^{2}x+x\sinh x\right) }{\left( x\cosh x-\sinh
x\right) ^{2}\left( \cosh x\sinh x-x\right) ^{2}} \\
&=&-\frac{\left( k-1\right) x^{2}\sinh x\cosh x}{\left( x\cosh x-\sinh
x\right) ^{2}\left( \cosh x\sinh x-x\right) ^{2}}\left( \left( \frac{\sinh x%
}{x}\right) ^{2}+\frac{\tanh x}{x}-2\right) <0.
\end{eqnarray*}

Consequently, $H=H_{1}\times H_{2}$ is positive and decreasing on $\left(
0,\infty \right) $, and so%
\begin{equation*}
0=\lim_{x\rightarrow \infty }\dfrac{G(x)}{kE\left( x\right) +F\left(
x\right) }<\dfrac{G(x)}{kE\left( x\right) +F\left( x\right) }%
<\lim_{x\rightarrow 0}\dfrac{G(x)}{kE\left( x\right) +F\left( x\right) }=%
\tfrac{12}{5\left( k+2\right) }.
\end{equation*}%
(ii) For $k<-2$, by the previous proof we see that $-H_{2}^{\prime }$ is
decreasing on $\left( 0,\infty \right) $, and so 
\begin{equation*}
0<-\frac{1}{k}=\lim_{x\rightarrow \infty }\left( -H_{2}\left( x\right)
\right) <-H_{2}\left( x\right) <\lim_{x\rightarrow 0}\left( -H_{2}\left(
x\right) \right) =-\frac{3}{k+2}.
\end{equation*}%
It is implied that $-H_{2}$ is positive and decreasing on $\left( 0,\infty
\right) $, and so is the function $-H=H_{1}\times \left( -H_{2}\right) $.
That is, $H$ is negative and increasing on $\left( 0,\infty \right) $, and (%
\ref{REF/G}) naturally holds.

This completes the proof.
\end{proof}

\begin{remark}
It should be noted that $kE\left( x\right) +F\left( x\right) <0$ for $k\geq
1 $ and $kE\left( x\right) +F\left( x\right) >0$ for $k<-2$. In fact, it
suffices to notice (\ref{REF/G}) and $G(x)<0$ for $x\in \left( 0,\infty
\right) $.
\end{remark}

\begin{lemma}
\label{Lemma 3}For $k\geq 1$, we have%
\begin{equation*}
1>\frac{\ln \left( k+2\right) -\ln 2}{k\left( \ln \pi -\ln 2\right) }>\frac{%
12}{5\left( k+2\right) }.
\end{equation*}
\end{lemma}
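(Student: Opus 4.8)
The plan is to treat the two displayed inequalities separately. Writing $L := \ln \pi - \ln 2 = \ln(\pi/2) > 0$, I first note that for $k \geq 1$ every quantity in sight is positive: $\ln(k+2) - \ln 2 = \ln\frac{k+2}{2} > 0$ since $k+2 > 2$, while $kL > 0$ and $5(k+2) > 0$. In each case I would clear denominators to reduce the claim to the sign of a single explicit function of $k$, prove that this function is monotone on $[1,\infty)$ by an elementary derivative computation, and then finish with one numerical check at the endpoint $k = 1$. Throughout, set $f(k) := \dfrac{\ln\frac{k+2}{2}}{kL}$, so that the lemma asserts $\frac{12}{5(k+2)} < f(k) < 1$.

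For the right-hand inequality $f(k) > \frac{12}{5(k+2)}$, positivity of all terms makes it equivalent to $5(k+2)\ln\frac{k+2}{2} > 12kL$, and dividing by $5k$ this reads $\phi(k) > \frac{12L}{5}$, where $\phi(k) := \frac{(k+2)\ln\frac{k+2}{2}}{k}$. A short computation gives $\phi'(k) = \frac{k - 2\ln\frac{k+2}{2}}{k^2}$, and with the substitution $t = k/2$ the numerator equals $2\big(t - \ln(1+t)\big)$, which is positive for all $t > 0$. Hence $\phi$ is strictly increasing on $(0,\infty)$, so $\phi(k) \geq \phi(1) = 3\ln\frac32$ for $k \geq 1$, and it suffices to verify $3\ln\frac32 > \frac{12}{5}L$. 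This is in turn $5\ln\frac32 > 4\ln\frac{\pi}{2}$, i.e. $(3/2)^5 > (\pi/2)^4$, which after clearing the powers of $2$ is exactly $\pi^4 < \frac{243}{2} = 121.5$.

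For the left-hand inequality $f(k) < 1$ I would rewrite it as $g(k) := kL - \ln\frac{k+2}{2} > 0$. Then $g'(k) = L - \frac{1}{k+2}$, and for $k \geq 1$ one has $\frac{1}{k+2} \leq \frac13 < L$, the last step being $\ln(\pi/2) > 1/3$, equivalently $\pi/2 > e^{1/3}$; this holds because $e < (3/2)^3 = 27/8$ forces $e^{1/3} < 3/2 < \pi/2$. Thus $g$ is increasing on $[1,\infty)$ and $g(k) \geq g(1) = L - \ln\frac32 = \ln\frac{\pi}{3} > 0$.

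The only non-formal ingredients are the three endpoint estimates, all elementary rational bounds on $\pi$ and $e$: $\pi^4 < 121.5$ (which follows, say, from $\pi < 3.15$, since $3.15^4 < 99 < 121.5$), together with $e < 27/8$ and $\pi > 3$. I expect the main obstacle to be purely organizational, namely arranging the two monotonicity arguments so that each reduces cleanly to one of these crude numerical facts; no finer information about $\pi$ or $e$ is needed.
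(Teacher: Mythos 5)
Your proof is correct, and its skeleton is the same as the paper's: split the claim into two one\mbox{-}sided inequalities, prove each by showing a suitable function of $k$ is monotone on $[1,\infty)$, and finish with an endpoint check at $k=1$. The paper works with $\delta_1(k)=\frac{\ln(k+2)-\ln 2}{\ln\pi-\ln 2}-k$ and $\delta_2(k)=\frac{\ln(k+2)-\ln 2}{\ln\pi-\ln 2}-\frac{12k}{5(k+2)}$, shows $\delta_1'<0$ and $\delta_2'>0$, and evaluates both at $k=1$; your $g(k)=kL-\ln\frac{k+2}{2}$ is just $-L\,\delta_1(k)$, so that half is identical up to normalization. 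Where you genuinely depart is the lower bound: by dividing out $k$ you compare $\phi(k)=\frac{(k+2)\ln\frac{k+2}{2}}{k}$ against the constant $\frac{12L}{5}$, and the sign of $\phi'$ reduces to $t>\ln(1+t)$, which needs no numerical input at all. By contrast, the paper's $\delta_2'(k)=\frac{1}{5}\frac{5k+24\ln 2-24\ln\pi+10}{(k+2)^2(\ln\pi-\ln 2)}$ is asserted positive, which silently requires an estimate such as $\ln(\pi/2)<\frac{5}{8}$ that the paper never justifies (similarly, $\delta_1'<0$ silently uses $\ln(\pi/2)>\frac{1}{3}$, which you prove via $e<27/8$). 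Both routes ultimately hinge on the same endpoint fact at $k=1$: the paper's $\delta_2(1)=\frac{\ln(3/2)}{\ln(\pi/2)}-\frac{4}{5}>0$ is precisely your $(3/2)^5>(\pi/2)^4$, i.e.\ $\pi^4<\frac{243}{2}$. So your version buys a fully self\mbox{-}contained argument whose only non-formal inputs are the crude bounds $\pi<3.15$, $e<27/8$, $\pi>3$, at the cost of one extra algebraic rearrangement; the paper's is shorter on the page but leaves its numerical steps unverified.
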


\begin{proof}
It suffices to show that 
\begin{eqnarray*}
\delta _{1}\left( k\right) &=&\frac{\ln \left( k+2\right) -\ln 2}{\ln \pi
-\ln 2}-k<0, \\
\delta _{2}\left( k\right) &=&\frac{\ln \left( k+2\right) -\ln 2}{\ln \pi
-\ln 2}-\frac{12k}{5\left( k+2\right) }>0
\end{eqnarray*}%
where $k\geq 1$.

Differentiation gives%
\begin{eqnarray*}
\delta _{1}^{\prime }\left( k\right) &=&\frac{1}{\left( \ln \pi -\ln
2\right) \left( k+2\right) }-1<0, \\
\delta _{2}^{\prime }\left( k\right) &=&\frac{1}{5}\frac{5k+24\ln 2-24\ln
\pi +10}{\left( k+2\right) ^{2}\left( \ln \pi -\ln 2\right) }>0
\end{eqnarray*}%
for $k\geq 1$. It follows that $\delta _{1}\left( k\right) \leq \delta
_{1}\left( 1\right) =\left( \ln 3-\ln 2\right) /\left( \ln 3-\ln \pi \right)
<0$, $\delta \left( k\right) \geq \delta \left( 1\right) =\left( \ln 3-\ln
2\right) /\left( \ln \pi -\ln 2\right) -4/5>0$, which proves the lemma.
\end{proof}

\section{Main results}

\begin{theorem}
\label{Main 1}For fixed $k\geq 1$, the inequality (\ref{Mt}) holds for $x\in
\left( 0,\pi /2\right) $ if and only if $p>0$ or $p\leq -\frac{\ln \left(
k+2\right) -\ln 2}{k\left( \ln \pi -\ln 2\right) }$
\end{theorem}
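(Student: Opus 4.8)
Throughout write $f_{p}$ for the left member of (\ref{Mt}) and set
\[
\Phi (x)=(k+2)\bigl(f_{p}(x)-1\bigr)=2\Bigl(\tfrac{\sin x}{x}\Bigr)^{kp}+k\Bigl(\tfrac{\tan x}{x}\Bigr)^{p}-(k+2),
\]
so that (\ref{Mt}) is equivalent to $\Phi >0$ on $(0,\pi /2)$. Since $\sin x/x,\tan x/x\to 1$ as $x\to 0^{+}$ we have $\Phi (0^{+})=0$, while as $x\to \pi /2^{-}$ we have $\sin x/x\to 2/\pi$ and $\tan x/x\to \infty$; hence for $p<0$ a direct limit gives $\Phi (\pi /2^{-})=2(\pi /2)^{-kp}-(k+2)$, for $p=0$ we get $\Phi \equiv 0$, and for $p>0$ we get $\Phi (\pi /2^{-})=+\infty$. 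Because $2(\pi /2)^{-kp}-(k+2)\geq 0$ holds exactly when $p\leq -p_{0}$, where $p_{0}=\frac{\ln (k+2)-\ln 2}{k(\ln \pi -\ln 2)}$, this already settles \emph{necessity}: if $p\leq 0$ and $p>-p_{0}$, then $\Phi (\pi /2^{-})<0$ (or $\Phi \equiv 0$ when $p=0$), so $\Phi <0$ for $x$ near $\pi /2$ and (\ref{Mt}) fails. Thus only $p>0$ or $p\leq -p_{0}$ can work, and it remains to prove sufficiency.

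For sufficiency I would control the sign of $\Phi ^{\prime }$. Logarithmic differentiation and clearing the positive factor $x\sin x\cos x$ gives, after setting $L(x)=\frac{2\cos x\,(\sin x-x\cos x)}{x-\sin x\cos x}>0$ and $\theta (x)=\ln \frac{\tan x/x}{(\sin x/x)^{k}}>0$, a factorisation of the form
\[
\Phi ^{\prime }(x)=\frac{kp\,(x-\sin x\cos x)}{x\sin x\cos x}\Bigl(\tfrac{\sin x}{x}\Bigr)^{kp}\bigl(e^{p\theta (x)}-L(x)\bigr).
\]
Since $\theta >0$ and $L>0$, the last factor has the sign of $p\theta -\ln L=\theta \,(p-M(x))$ with $M(x)=\ln L(x)/\theta (x)$, so that
\[
\operatorname{sign}\Phi ^{\prime }(x)=\operatorname{sign}(p)\cdot \operatorname{sign}\bigl(p-M(x)\bigr).
\]
The whole argument therefore reduces to understanding the $p$-free function $M$.

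This is the crux, and it is exactly what Lemma~\ref{Lemma AB/C} delivers. A computation (the same simplifications used to produce the factors $D_{1},D_{2}$ in the proof of Lemma~\ref{Lemma AB/C}, together with the Wilker identity $-2x^{2}\cos x+x\sin x+\cos x\sin ^{2}x=x^{2}\cos x\,(u^{2}+v-2)$) yields the key identity
\[
\frac{(\ln L)^{\prime }(x)}{\theta ^{\prime }(x)}=-\frac{C(x)}{kA(x)+B(x)}.
\]
Here $\theta ^{\prime }>0$ because $kA+B>0$, and $L(0^{+})=1$, $\theta (0^{+})=0$, so $M=\ln L/\theta$ is a $0/0$ ratio at $0$. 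Since Lemma~\ref{Lemma AB/C} asserts that $C/(kA+B)$ is increasing, the ratio $(\ln L)^{\prime }/\theta ^{\prime }=-C/(kA+B)$ is decreasing, whence the monotone form of L'Hôpital's rule gives that $M$ is \emph{decreasing} on $(0,\pi /2)$, with $M(x)<\lim_{x\to 0^{+}}M=-\frac{12}{5(k+2)}<0$. I expect this monotone-ratio step—verifying the identity and invoking Lemma~\ref{Lemma AB/C}—to be the main obstacle.

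With $M$ decreasing and negative the conclusion follows by a shape argument. If $p>0$, then $p-M(x)>0$ and $\Phi ^{\prime }>0$, so $\Phi $ increases from $\Phi (0^{+})=0$ and $\Phi >0$. If $p<0$, then $\operatorname{sign}\Phi ^{\prime }=\operatorname{sign}(M(x)-p)$, and since $M$ decreases, $M(x)-p$ decreases; hence $\Phi ^{\prime }$ changes sign at most once, from $+$ to $-$, so $\Phi $ is either increasing or first increases then decreases. For $p\leq -p_{0}$ both endpoint values $\Phi (0^{+})=0$ and $\Phi (\pi /2^{-})=2(\pi /2)^{-kp}-(k+2)\geq 0$ are nonnegative; an increasing-then-decreasing function starting at $0$ and ending at a nonnegative value is strictly positive on the open interval, giving (\ref{Mt}). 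Lemma~\ref{Lemma 3} guarantees $\frac{12}{5(k+2)}<p_{0}<1$, which places $-p_{0}$ strictly inside the range of $M$; this both confirms that the failure interval $(-p_{0},0)$ is genuine and ensures that at the endpoint $p=-p_{0}$ the function $\Phi$ is strictly unimodal with both ends $0$, hence still positive. Combining the two regimes yields that (\ref{Mt}) holds iff $p>0$ or $p\leq -p_{0}$.
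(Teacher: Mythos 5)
Your argument is correct, and it rests on the same two pillars as the paper's proof --- the factorization of the first derivative and Lemma \ref{Lemma AB/C} --- but the way you exploit the lemma is genuinely different. In the paper, the bracket in $f'$ is the function $g$ of (\ref{g}); your last factor $e^{p\theta }-L$ equals $e^{p\theta }g$, so the two factorizations agree. The paper then finds the sign of $g$ by differentiating once more, $\operatorname{sgn}g'=\operatorname{sgn}\bigl((kA+B)(p+C/(kA+B))\bigr)$, and runs a three-case analysis ($p>0$, $p\leq -1$, $-1<p\leq -p_{0}$) that tracks the monotonicity pattern of $g$ together with the values $g(0^{+})=0$ and $g(\pi /2^{-})=1$. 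You instead isolate the $p$-free ratio $M=\ln L/\theta $ and apply the monotone l'H\^{o}pital rule; your key identity $(\ln L)'/\theta '=-C/(kA+B)$ is correct (I checked it; it is precisely the computational content of (\ref{dg})--(\ref{h})), so the lemma's monotonicity of $C/(kA+B)$ gives $M$ decreasing with $M(0^{+})=-\tfrac{12}{5(k+2)}$, and the paper's two negative-$p$ cases collapse into a single unimodality argument. Your necessity is also leaner: the paper imposes both the Taylor-expansion limit at $0$ and the limit at $\pi /2$ and then needs Lemma \ref{Lemma 3} to identify which bound is the minimum, whereas you observe that the $\pi /2$ limit alone already excludes every $p\in (-p_{0},0]$; in your proof Lemma \ref{Lemma 3} enters only in sufficiency, to guarantee $-p_{0}<M(0^{+})$ so that $\Phi '>0$ near $0$ when $p\leq -p_{0}$. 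Two small remarks: the left constant in (\ref{RAB/C}) is misprinted in the paper as $\tfrac{5}{12(k+2)}$ --- the lemma's own proof and the rest of the paper yield $\tfrac{12}{5(k+2)}$, which is the value you (correctly) use; and your parenthetical claims about the exact lower end of the range of $M$ (the value $-1$) are not justified in your write-up, but nothing in your argument needs them, since the shape argument requires only that $M$ decreases, that $M(0^{+})>p$, and that $\Phi (\pi /2^{-})\geq 0$.
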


\begin{proof}
The inequality (\ref{Mt}) is equivalent to%
\begin{equation}
f\left( x\right) =\frac{2}{k+2}\left( \frac{\sin x}{x}\right) ^{kp}+\frac{k}{%
k+2}\left( \frac{\tan x}{x}\right) ^{p}-1>0  \label{f}
\end{equation}%
for $x\in \left( 0,\pi /2\right) $. Differentiation yields%
\begin{eqnarray}
f^{\prime }\left( x\right) &=&-\tfrac{2kp}{k+2}\frac{\sin x-x\cos x}{x^{2}}%
\left( \frac{\sin x}{x}\right) ^{kp-1}+\tfrac{kp}{k+2}\frac{x-\sin x\cos x}{%
x^{2}\cos ^{2}x}\left( \frac{\tan x}{x}\right) ^{p-1}  \notag \\
&=&\tfrac{kp}{k+2}\frac{x-\sin x\cos x}{x^{2}\cos ^{2}x}\left( \frac{\tan x}{%
x}\right) ^{p-1}g(x),  \label{df}
\end{eqnarray}%
where 
\begin{equation}
g(x)=1-4\frac{\sin x-x\cos x}{2x-\sin 2x}\left( \frac{\sin x}{x}\right)
^{\left( k-1\right) p}\left( \cos x\right) ^{p+1}.  \label{g}
\end{equation}%
Simple computation leads to $g(0^{+})=0$.

Differentiation again and simplifying give

\begin{equation}
g^{\prime }\left( x\right) =8\frac{\left( \frac{\sin x}{x}\right) ^{\left(
k-1\right) p}\left( \cos x\right) ^{p}}{x\left( \sin x\right) \left( 2x-\sin
2x\right) ^{2}}h\left( x\right) ,  \label{dg}
\end{equation}%
where 
\begin{eqnarray}
h\left( x\right)  &=&\left( \cos x\right) \left( \sin x-x\cos x\right)
^{2}\left( x-\cos x\sin x\right) kp  \notag \\
&&+\left( x-\cos x\sin x\right) ^{2}\left( \sin x-x\cos x\right) p  \notag \\
&&+x\left( \sin ^{2}x\right) \left( -2x^{2}\cos x+x\sin x+\cos x\sin
^{2}x\right)   \notag \\
&=&kpA\left( x\right) +pB\left( x\right) +C\left( x\right)   \label{h} \\
&=&\left( kA+B\right) \left( p+\frac{C}{kA+B}\right) ,  \notag
\end{eqnarray}%
here $A\left( x\right) ,B\left( x\right) ,C\left( x\right) $ are defined by (%
\ref{A}), (\ref{B}), (\ref{C}), respectively.

By (\ref{df}), (\ref{dg}) we easily get 
\begin{eqnarray}
\limfunc{sgn}f^{\prime }\left( x\right) &=&\limfunc{sgn}p\limfunc{sgn}g(x),
\label{sgn(df)} \\
\limfunc{sgn}g^{\prime }\left( x\right) &=&\limfunc{sgn}h\left( x\right) .
\label{sgn(dg)}
\end{eqnarray}

\textbf{Necessity}. We first present two limit relations:%
\begin{eqnarray}
\lim_{x\rightarrow 0^{+}}x^{4}f\left( x\right) &=&\frac{kp}{36}\left( p+%
\frac{12}{5\left( k+2\right) }\right) ,  \label{Limit1} \\
\lim_{x\rightarrow \left( \pi /2\right) ^{-}}f\left( x\right) &=&\left\{ 
\begin{array}{cc}
\infty & \text{if }p>0, \\ 
\frac{2}{k+2}\left( \frac{2}{\pi }\right) ^{kp}-1 & \text{if }p<0.%
\end{array}%
\right.  \label{Limit2}
\end{eqnarray}%
In fact, using power series extension yields%
\begin{equation*}
f\left( x\right) =\frac{kp}{36}\frac{kp+2p+12/5}{k+2}x^{4}+O\left(
x^{6}\right) ,
\end{equation*}%
which implies the first limit relation (\ref{Limit1}). From the fact $%
\lim_{x\rightarrow \pi /2^{-}}\tan x=\infty $ the second one (\ref{Limit2})
easily follows.

Now we can derive the necessary condition for (\ref{Mt}) to holds for $x\in
\left( 0,\pi /2\right) $ from the simultaneous inequalities $%
\lim_{x\rightarrow 0^{+}}x^{4}f\left( x\right) \geq 0$ and $%
\lim_{x\rightarrow \left( \pi /2\right) ^{-}}f\left( x\right) \geq 0$.
Solving for $p$ yields $p>0$ or 
\begin{equation*}
p\leq \min \left( -\frac{12}{5\left( k+2\right) },-\frac{\ln \left(
k+2\right) -\ln 2}{k\left( \ln \pi -\ln 2\right) }\right) =-\frac{\ln \left(
k+2\right) -\ln 2}{k\left( \ln \pi -\ln 2\right) },
\end{equation*}%
where the equality holds is due to the Lemma \ref{Lemma 3}.

\textbf{Sufficiency}. We prove the condition $p>0$ or $p\leq -\frac{\ln
\left( k+2\right) -\ln 2}{k\left( \ln \pi -\ln 2\right) }$ is sufficient. We
distinguish three cases.

Case 1: $p>0$. Clearly, $h\left( x\right) >0$, then $g^{\prime }\left(
x\right) >0$, and then $g\left( x\right) >g\left( 0^{+}\right) =0$, which
together with $\func{sgn}p=1$ yields $f^{\prime }\left( x\right) >0$. Then $%
f\left( x\right) >f\left( 0^{+}\right) =0$.

Case 2: $p\leq -1$. By Lemma (\ref{Lemma AB/C}) it is easy to get 
\begin{equation*}
p+\frac{C}{kA+B}<p+1\leq 0,
\end{equation*}%
which reveals that $h\left( x\right) <0$, then $g^{\prime }\left( x\right)
<0 $, and then $g\left( x\right) <g\left( 0^{+}\right) =0$, which in
combination with $\func{sgn}p=-1$ implies $f^{\prime }\left( x\right) >0$.
Then $f\left( x\right) >f\left( 0^{+}\right) =0$.

Case 3: $-1<p\leq -\frac{\ln \left( k+2\right) -\ln 2}{k\left( \ln \pi -\ln
2\right) }$. Lemma (\ref{Lemma AB/C}) reveals that $\frac{C}{kA+B}$ is
increasing on $\left( 0,\pi /2\right) $, so is the function $x\mapsto p+%
\frac{C}{kA+B}:=\lambda \left( x\right) $. Since 
\begin{equation*}
\lambda \left( 0^{+}\right) =p+\frac{12}{5\left( k+2\right) }<0\text{, \ }%
\lambda \left( \frac{\pi }{2}^{-}\right) =p+1>0,
\end{equation*}%
there is a unique $x_{1}\in \left( 0,\pi /2\right) $ such that $\lambda
\left( x\right) <0$ for $x\in \left( 0,x_{1}\right) $ and $\lambda \left(
x\right) >0$ for $x\in \left( x_{1},\pi /2\right) $, and so is $g^{\prime
}\left( x\right) $. Therefore, $g\left( x\right) <g\left( 0^{+}\right) =0$
for $x\in \left( 0,x_{1}\right) $ but $g\left( \pi /2^{-}\right) =1$, which
implies that there is a sole $x_{0}\in \left( x_{1},\pi /2\right) $ such
that $g\left( x\right) <0$ for $x\in \left( 0,x_{0}\right) $ and $g\left(
x\right) >0$ for $x\in \left( x_{0},\pi /2\right) $. Due to $\func{sgn}p=-1$
it is deduced that $f^{\prime }\left( x\right) >0$ for $x\in \left(
0,x_{0}\right) $ and $f^{\prime }\left( x\right) <0$ for $x\in \left(
x_{0},\pi /2\right) $, which reveals that $f$ is increasing on $\left(
0,x_{0}\right) $ and decreasing on $\left( x_{0},\pi /2\right) $. It follows
that 
\begin{eqnarray*}
0 &=&f\left( 0^{+}\right) <f\left( x\right) <f\left( x_{0}\right) =0\text{
for }x\in \left( 0,x_{0}\right) , \\
f\left( x_{0}\right) &>&f\left( x\right) >f\left( \pi /2^{-}\right) =\tfrac{2%
}{k+2}\left( \tfrac{2}{\pi }\right) ^{kp}-1\geq 0\text{ for }x\in \left(
x_{0},\pi /2\right) ,
\end{eqnarray*}%
that is, $f\left( x\right) >0$ for $x\in \left( 0,\pi /2\right) $.

This completes the proof.
\end{proof}

\begin{theorem}
\label{Main 2}For fixed $k\geq 1$, the reverse of (\ref{Mt}), that is, 
\begin{equation}
\frac{2}{k+2}\left( \frac{\sin x}{x}\right) ^{kp}+\frac{k}{k+2}\left( \frac{%
\tan x}{x}\right) ^{p}<1  \label{Mtr}
\end{equation}%
holds for $x\in \left( 0,\pi /2\right) $ if and only if $-\frac{12}{5\left(
k+2\right) }\leq p<0$.
\end{theorem}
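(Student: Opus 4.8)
The plan is to reuse the analytic machinery built for Theorem \ref{Main 1}, merely flipping signs. I keep the same $f$ of (\ref{f}), so that (\ref{Mtr}) reads exactly $f\left(x\right)<0$ on $\left(0,\pi/2\right)$, together with the auxiliary functions $g$, $h$ of (\ref{g}), (\ref{h}), the sign identities (\ref{sgn(df)}), (\ref{sgn(dg)}), the boundary values $f\left(0^{+}\right)=g\left(0^{+}\right)=0$, and the two limit relations (\ref{Limit1}), (\ref{Limit2}).

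For necessity I read the constraints off the two limits. Since $f<0$ and $x^{4}>0$, relation (\ref{Limit1}) forces $\tfrac{kp}{36}\bigl(p+\tfrac{12}{5\left(k+2\right)}\bigr)\le 0$; as $k\ge 1$ this reduces to $p\bigl(p+\tfrac{12}{5\left(k+2\right)}\bigr)\le 0$, i.e.\ $-\tfrac{12}{5\left(k+2\right)}\le p\le 0$. Because $p=0$ makes $f\equiv 0$, we must have $p<0$. For such $p$, (\ref{Limit2}) additionally requires $\tfrac{2}{k+2}\left(2/\pi\right)^{kp}\le 1$, that is $p\ge -\tfrac{\ln\left(k+2\right)-\ln 2}{k\left(\ln\pi-\ln 2\right)}$; but Lemma \ref{Lemma 3} places this lower bound strictly below $-\tfrac{12}{5\left(k+2\right)}$, so it is automatically satisfied and imposes nothing new. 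Hence necessity yields exactly $-\tfrac{12}{5\left(k+2\right)}\le p<0$.

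For sufficiency, assume $-\tfrac{12}{5\left(k+2\right)}\le p<0$ and set $\lambda\left(x\right)=p+\tfrac{C}{kA+B}$ as in Case~3 of Theorem \ref{Main 1}. By Lemma \ref{Lemma AB/C} the ratio $C/\left(kA+B\right)$ is increasing on $\left(0,\pi/2\right)$ with limit $\tfrac{12}{5\left(k+2\right)}$ at $0^{+}$ (the value used in Case~3 of Theorem \ref{Main 1}), so $C/\left(kA+B\right)>\tfrac{12}{5\left(k+2\right)}$ throughout, whence $\lambda\left(x\right)>p+\tfrac{12}{5\left(k+2\right)}\ge 0$. Since $kA+B>0$, this makes $h=\left(kA+B\right)\lambda>0$, so $g^{\prime}>0$ by (\ref{sgn(dg)}) and $g\left(x\right)>g\left(0^{+}\right)=0$. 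Then (\ref{sgn(df)}) with $\func{sgn}p=-1$ gives $f^{\prime}\left(x\right)<0$, so $f$ is strictly decreasing and $f\left(x\right)<f\left(0^{+}\right)=0$, which is (\ref{Mtr}).

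The argument is short because the decisive fact --- that $C/\left(kA+B\right)$ never drops to or below $\tfrac{12}{5\left(k+2\right)}$ on the open interval --- is already packaged in Lemma \ref{Lemma AB/C}; in particular the endpoint $p=-\tfrac{12}{5\left(k+2\right)}$ costs nothing, since that bound is strict for $x>0$, so none of the delicate transition-point analysis of Case~3 of Theorem \ref{Main 1} is needed here. The only place demanding attention is the necessity bookkeeping: one must verify that the constraint coming from the $\pi/2$ endpoint is dominated by the one from the origin, which is precisely the role of Lemma \ref{Lemma 3}. Beyond keeping the sign of $p$ straight when passing from $g$ to $f$, I anticipate no genuine obstacle.
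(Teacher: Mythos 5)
Your proof is correct and is essentially identical to the paper's: necessity from the fourth-order limit (\ref{Limit1}) at the origin, and sufficiency by using Lemma \ref{Lemma AB/C} to get $p+\tfrac{C}{kA+B}>0$, hence $h>0$, $g>0$, and (via $\limfunc{sgn}p=-1$) $f^{\prime }<0$, so $f<f\left( 0^{+}\right) =0$. Your two additions --- the check that the $\pi /2$ endpoint constraint is dominated via Lemma \ref{Lemma 3} (the paper's necessity uses only the limit at $0$), and the explicit strictness $\tfrac{C}{kA+B}>\tfrac{12}{5\left( k+2\right) }$ handling the endpoint $p=-\tfrac{12}{5\left( k+2\right) }$ --- are harmless refinements of the same argument, the latter actually tidying a point the paper glosses over.
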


\begin{proof}
\textbf{Necessity}. If inequality (\ref{Mtr}) holds for $x\in \left( 0,\pi
/2\right) $, then we have 
\begin{equation*}
\lim_{x\rightarrow 0^{+}}\frac{f\left( x\right) }{x^{4}}=\frac{kp}{36}\left(
p+\frac{12}{5\left( k+2\right) }\right) \leq 0.
\end{equation*}

Solving the inequalities for $p$ yields $-\frac{12}{5\left( k+2\right) }\leq
p<0$.

\textbf{Sufficiency}. We prove the condition $-\frac{12}{5\left( k+2\right) }%
\leq p<0$ is sufficient. It suffices to show that $f\left( x\right) <0$ for $%
x\in \left( 0,\pi /2\right) $. By Lemma (\ref{Lemma AB/C}) it is easy to get 
\begin{equation*}
p+\frac{C}{kA+B}\geq p+\frac{12}{5\left( k+2\right) }\geq 0,
\end{equation*}%
which reveals that $h\left( x\right) >0$, then $g^{\prime }\left( x\right)
>0 $, and then $g\left( x\right) >g\left( 0^{+}\right) =0$. It in
combination with $\func{sgn}p=-1$ implies $f^{\prime }\left( x\right) <0$.
Thus, $f\left( x\right) <f\left( 0^{+}\right) =0$, which proves the
sufficiency and the proof is complete.
\end{proof}

\begin{theorem}
\label{Main 3}For fixed $k\geq 1$, the inequality (\ref{Mh}) holds for $x\in
\left( 0,\infty \right) $ if and only if $p>0$ or $p\leq -\frac{12}{5\left(
k+2\right) }$.
\end{theorem}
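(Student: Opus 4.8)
The plan is to run the argument of Theorem \ref{Main 1} with every circular function replaced by its hyperbolic counterpart and with Lemma \ref{Lemma EF/G} playing the role of Lemma \ref{Lemma AB/C}. Set
\[
f(x)=\frac{2}{k+2}\left(\frac{\sinh x}{x}\right)^{kp}+\frac{k}{k+2}\left(\frac{\tanh x}{x}\right)^{p}-1,
\]
so that (\ref{Mh}) is equivalent to $f(x)>0$ on $(0,\infty)$ and $f(0^{+})=0$. Differentiating and factoring as in (\ref{df})--(\ref{g}), I expect
\[
f^{\prime}(x)=\frac{kp}{k+2}\,\frac{x-\sinh x\cosh x}{x^{2}\cosh^{2}x}\left(\frac{\tanh x}{x}\right)^{p-1}g(x),
\]
where $g(x)=1-4\dfrac{x\cosh x-\sinh x}{\sinh 2x-2x}\left(\frac{\sinh x}{x}\right)^{(k-1)p}(\cosh x)^{p+1}$ satisfies $g(0^{+})=0$, and a second differentiation should produce $g^{\prime}(x)=(\text{positive factor})\cdot(kpE+pF+G)=(\text{positive factor})(kE+F)\bigl(p+\tfrac{G}{kE+F}\bigr)$ with $E,F,G$ as in Lemma \ref{Lemma EF/G}. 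Since here $x-\sinh x\cosh x=(2x-\sinh 2x)/2<0$, the sign rules become $\operatorname{sgn}f^{\prime}(x)=-\operatorname{sgn}(p)\operatorname{sgn}g(x)$ and $\operatorname{sgn}g^{\prime}(x)=\operatorname{sgn}(kpE+pF+G)$.

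For necessity I would expand $f$ near $0$. The expansions $\sinh x/x=1+x^{2}/6+\cdots$ and $\tanh x/x=1-x^{2}/3+\cdots$ cancel through fourth order exactly as in the circular case, so I expect the same leading coefficient as in (\ref{Limit1}), namely $\lim_{x\to 0^{+}}f(x)/x^{4}=\tfrac{kp}{36}\bigl(p+\tfrac{12}{5(k+2)}\bigr)$. Positivity of $f$ near $0$ forces this limit to be nonnegative, and excluding $p=0$ (for which $f\equiv 0$) this gives precisely $p>0$ or $p\le-\tfrac{12}{5(k+2)}$. In contrast to the circular case the right end contributes nothing: because $\sinh x/x\to\infty$ and $\tanh x/x\to 0$, one has $f(x)\to+\infty$ as $x\to\infty$ whenever $p\neq 0$, so no constraint arises there.

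For sufficiency I would invoke Lemma \ref{Lemma EF/G}, which for $k\ge 1$ gives $kE+F<0$, $G<0$ and $0<G/(kE+F)<\tfrac{12}{5(k+2)}$ throughout $(0,\infty)$. If $p>0$ then $kpE+pF+G<0$ everywhere, so $g^{\prime}<0$ and $g(x)<g(0^{+})=0$; together with $\operatorname{sgn}f^{\prime}=-\operatorname{sgn}(p)\operatorname{sgn}g$ this yields $f^{\prime}>0$ and thus $f(x)>f(0^{+})=0$. If $p\le-\tfrac{12}{5(k+2)}$, the upper bound on $G/(kE+F)$ gives $p+G/(kE+F)<0$ everywhere, whence $kpE+pF+G=(kE+F)\bigl(p+G/(kE+F)\bigr)>0$, so $g^{\prime}>0$, $g(x)>0$, and since now $\operatorname{sgn}p=-1$ again $f^{\prime}>0$ and $f(x)>0$. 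In either case $f$ is strictly increasing on the whole half-line.

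The main obstacle is the second differentiation: I must verify that $g^{\prime}$ factors exactly as a positive quantity times $kpE+pF+G$, with $E,F,G$ the functions of Lemma \ref{Lemma EF/G}, and confirm that the prefactor is positive (it is, since $\sinh 2x-2x>0$ and all power factors are positive). Granting this identity, the proof is actually shorter than that of Theorem \ref{Main 1}: the uniform sign of $G/(kE+F)$ makes $f$ monotone on all of $(0,\infty)$ in each case, so the three-case analysis of Theorem \ref{Main 1}, with its delicate increasing-then-decreasing argument at the finite endpoint $\pi/2$, is unnecessary here.
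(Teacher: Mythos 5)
Your proposal is correct and is essentially the paper's own proof: the paper defines the same function (called $u$ there), obtains exactly your factorizations of the first and second derivatives (your $f'$ and $g'$, equivalent via $x-\sinh x\cosh x=-(\sinh 2x-2x)/2$), derives necessity from the same fourth-order expansion at the origin, and proves sufficiency in the same two cases using Lemma \ref{Lemma EF/G} together with the fact that $kE+F<0$ for $k\geq 1$. The one step you flagged as unverified --- that $g'$ equals a positive factor times $kpE+pF+G$ --- is precisely the paper's identity (\ref{dv})--(\ref{w}), so nothing is missing.
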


\begin{proof}
We define 
\begin{equation}
u\left( x\right) =\frac{2}{k+2}\left( \frac{\sinh x}{x}\right) ^{kp}+\frac{k%
}{k+2}\left( \frac{\tanh x}{x}\right) ^{p}-1.  \label{u}
\end{equation}%
Then inequality (\ref{Mh}) is equivalent to $u\left( x\right) >0$.
Differentiation leads to%
\begin{equation}
u^{\prime }\left( x\right) =-\frac{kp}{2\left( k+2\right) }\frac{\sinh 2x-2x%
}{x^{2}\cosh ^{2}x}\left( \frac{\tanh x}{x}\right) ^{p-1}v\left( x\right) ,
\label{du}
\end{equation}%
where 
\begin{equation}
v\left( x\right) =1-4\frac{\sinh x-x\cosh x}{2x-\sinh 2x}\left( \frac{\sinh x%
}{x}\right) ^{kp-p}\left( \cosh x\right) ^{p+1}.  \label{v}
\end{equation}%
Differentiation again gives%
\begin{equation}
v^{\prime }\left( x\right) =\frac{2\left( \cosh ^{p}x\right) \left( \frac{%
\sinh x}{x}\right) ^{kp-p}}{\left( x\sinh x\right) \left( x-\cosh x\sinh
x\right) ^{2}}w\left( x\right) ,  \label{dv}
\end{equation}%
where 
\begin{eqnarray}
w\left( x\right) &=&\left( \cosh x\right) \left( \sinh x-x\cosh x\right)
^{2}\left( x-\cosh x\sinh x\right) kp  \notag \\
&&+\left( \sinh x-x\cosh x\right) \left( x-\cosh x\sinh x\right) ^{2}p 
\notag \\
&&+x\left( \sinh ^{2}x\right) \left( 2x^{2}\cosh x-x\sinh x-\cosh x\sinh
^{2}x\right)  \notag \\
&=&kpE\left( x\right) +pF\left( x\right) +G\left( x\right) =\left(
kE+F\right) \left( p+\frac{G}{kE+F}\right) ,  \label{w}
\end{eqnarray}%
here $E\left( x\right) ,F\left( x\right) ,G\left( x\right) $ are defined by (%
\ref{E}), (\ref{F}), (\ref{G}), respectively.

By (\ref{du}), (\ref{dv}) we easily get 
\begin{eqnarray}
\limfunc{sgn}u^{\prime }\left( x\right) &=&-\limfunc{sgn}\frac{k}{k+2}%
\limfunc{sgn}p\limfunc{sgn}v(x),  \label{sgn(du)} \\
\limfunc{sgn}v^{\prime }\left( x\right) &=&\limfunc{sgn}w\left( x\right) .
\label{sgn(dv)}
\end{eqnarray}

\textbf{Necessity}. If inequality (\ref{Mh}) holds for $x\in \left( 0,\infty
\right) $, then we have $\lim_{x\rightarrow 0^{+}}x^{-4}u\left( x\right)
\geq 0$. Expanding $u\left( x\right) $ in power series gives 
\begin{equation*}
u\left( x\right) =\frac{k}{36}p\left( p+\frac{12}{5p\left( k+2\right) }%
\right) x^{4}+O\left( x^{6}\right) .
\end{equation*}%
Hence we get 
\begin{equation*}
\lim_{x\rightarrow 0^{+}}x^{-4}u\left( x\right) =\frac{k}{36}p\left( p+\frac{%
12}{5\left( k+2\right) }\right) \geq 0.
\end{equation*}%
Solving the inequality for $p$ yields $p>0$ or $p\leq -\frac{12}{5\left(
k+2\right) }$.

\textbf{Sufficiency}. We prove the condition $p>0$ or $p\leq -\frac{12}{%
5\left( k+2\right) }$ is sufficient for (\ref{Mh}) to hold.

If $p>0$, then $w\left( x\right) <0$ due to $E,F,G<0$. Hence, from (\ref%
{sgn(dv)}) we have $v^{\prime }\left( x\right) <0$, and then $v\left(
x\right) <\lim_{x\rightarrow 0^{+}}v\left( x\right) =0$. It is derived by (%
\ref{sgn(du)}) that $u^{\prime }\left( x\right) >0$, and so $u\left(
x\right) >\lim_{x\rightarrow 0^{+}}u\left( x\right) =0$.

If $p\leq -\frac{12}{5\left( k+2\right) }$, then by Lemma \ref{Lemma EF/G}
we have%
\begin{equation*}
p+\frac{G}{kE+F}\leq -\frac{12}{5\left( k+2\right) }+\frac{G}{kE+F}<0,
\end{equation*}%
and then%
\begin{equation*}
w\left( x\right) =\left( kE+F\right) \left( p+\frac{G}{kE+F}\right) >0.
\end{equation*}%
From (\ref{sgn(dv)}) we have $v^{\prime }\left( x\right) >0$, and then $%
v\left( x\right) >\lim_{x\rightarrow 0^{+}}v\left( x\right) =0$. It follows
by (\ref{sgn(du)}) that $u^{\prime }\left( x\right) >0$, which implies that $%
u\left( x\right) >\lim_{x\rightarrow 0^{+}}u\left( x\right) =0$.

This completes the proof.
\end{proof}

\begin{remark}
For $k\geq 1$, since $\lim_{x\rightarrow \infty }u\left( x\right) =\infty $
for $p\neq 0$ and $\lim_{x\rightarrow \infty }u\left( x\right) =0$ for $p=0$%
, there has no $p$ such that the reverse inequality of (\ref{Mh}) holds for
all $x>0$. But we can show that there is a unique $x_{0}\in \left( 0,\infty
\right) $ such that $u\left( x\right) <0$, that is, the reverse inequality
of (\ref{Mh}), for $-\frac{12}{5\left( k+2\right) }<p<0$. The details of
proof are omitted.
\end{remark}

\begin{theorem}
\label{Main 4}For fixed $k<-2$, the reverse of (\ref{Mh}), that is,%
\begin{equation}
\frac{2}{k+2}\left( \frac{\sinh x}{x}\right) ^{kp}+\frac{k}{k+2}\left( \frac{%
\tanh x}{x}\right) ^{p}<1  \label{Mhr}
\end{equation}
holds for $x\in \left( 0,\infty \right) $ if and only if $p<0$ or $p\geq -%
\frac{12}{5\left( k+2\right) }$.
\end{theorem}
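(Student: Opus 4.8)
The plan is to recycle the machinery from the proof of Theorem \ref{Main 3}: since the left-hand side of (\ref{Mhr}) is literally the expression appearing in (\ref{Mh}), I would reuse the same function $u$ defined in (\ref{u}) together with the derivative identities (\ref{du}), (\ref{dv}) and (\ref{w}) and the sign relations (\ref{sgn(du)}), (\ref{sgn(dv)}). The inequality (\ref{Mhr}) is equivalent to $u(x)<0$ on $\left(0,\infty\right)$. Before starting, I record the sign data specific to $k<-2$: here $\frac{2}{k+2}<0$ and $\frac{k}{k+2}>0$, so $\limfunc{sgn}\frac{k}{k+2}=1$ in (\ref{sgn(du)}); by the Remark following Lemma \ref{Lemma EF/G} we have $kE+F>0$; and by the $k<-2$ branch of (\ref{REF/G}) the ratio $G/(kE+F)$ is increasing with $\frac{12}{5\left(k+2\right)}<\frac{G}{kE+F}<0$. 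As in Theorem \ref{Main 3} one checks $u(0^{+})=0$ and $\lim_{x\to0^{+}}v(x)=0$.

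For the necessity, I would expand $u$ in a power series at the origin. This computation is insensitive to the sign of $k$, so it again gives $\lim_{x\to0^{+}}x^{-4}u(x)=\frac{k}{36}p\left(p+\frac{12}{5\left(k+2\right)}\right)$. If (\ref{Mhr}) holds throughout $\left(0,\infty\right)$ then this limit is $\leq0$; since $k/36<0$, dividing reverses the inequality to $p\left(p+\frac{12}{5\left(k+2\right)}\right)\geq0$. With $\alpha:=-\frac{12}{5\left(k+2\right)}>0$ this is $p(p-\alpha)\geq0$, i.e. $p\leq0$ or $p\geq\alpha$; discarding $p=0$ (for which $u\equiv0$) yields exactly the claimed range $p<0$ or $p\geq-\frac{12}{5\left(k+2\right)}$.

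For the sufficiency I would treat the two ranges separately, and in both the conclusion follows from a single monotonicity argument. If $p\geq\alpha=-\frac{12}{5\left(k+2\right)}$, the lower bound in (\ref{REF/G}) gives $p+\frac{G}{kE+F}>p-\alpha\geq0$, hence $w=(kE+F)\left(p+\frac{G}{kE+F}\right)>0$ because $kE+F>0$; by (\ref{sgn(dv)}) the function $v$ is increasing, so $v>v(0^{+})=0$, and then (\ref{sgn(du)}) with $\limfunc{sgn}\frac{k}{k+2}=1$ and $p>0$ forces $u'<0$, whence $u<u(0^{+})=0$. If $p<0$, then $\frac{G}{kE+F}<0$ and $p<0$ give $p+\frac{G}{kE+F}<0$, so $w<0$ and $v$ is decreasing, giving $v<v(0^{+})=0$; now (\ref{sgn(du)}) gives $\limfunc{sgn}u'=-(1)(\limfunc{sgn}p)(\limfunc{sgn}v)=-(1)(-1)(-1)<0$, so again $u$ decreases from $0$ and $u<0$ on $\left(0,\infty\right)$. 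Together the two cases cover the asserted range and complete the proof.

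I expect no serious analytic obstacle: the structure is parallel to Theorem \ref{Main 3} and is in fact simpler than the trigonometric Theorem \ref{Main 1}, since for $k<-2$ the derivative $u'$ never changes sign and no interior critical point (the ``Case 3'' phenomenon) appears. The only delicate point is bookkeeping the reversed signs induced by $k<-2$ --- namely $\frac{2}{k+2}<0$, $kE+F>0$, and $G/(kE+F)<0$ --- and invoking the correct (increasing, negative) branch of Lemma \ref{Lemma EF/G} rather than the $k\geq1$ branch used earlier.
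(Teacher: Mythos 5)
Your proposal is correct and follows essentially the same route as the paper's own proof: necessity from the limit $\lim_{x\to 0^{+}}x^{-4}u(x)=\frac{k}{36}p\bigl(p+\frac{12}{5(k+2)}\bigr)\leq 0$, and sufficiency by the two cases $p<0$ and $p\geq -\frac{12}{5(k+2)}$, each reduced to a fixed sign of $w$ via $kE+F>0$ and the $k<-2$ branch of Lemma \ref{Lemma EF/G}, then propagated through (\ref{sgn(dv)}) and (\ref{sgn(du)}) to get $u'<0$. Your bookkeeping (including the explicit exclusion of $p=0$, where $u\equiv 0$) is if anything slightly more careful than the paper's.
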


\begin{proof}
\textbf{Necessity}. If inequality (\ref{Mh}) holds for $x\in \left( 0,\infty
\right) $, then we have 
\begin{equation*}
\lim_{x\rightarrow 0^{+}}\frac{u\left( x\right) }{x^{4}}=\frac{k}{36}p\left(
p+\frac{12}{5\left( k+2\right) }\right) \leq 0.
\end{equation*}%
Solving the inequality for $p$ yields $p<0$ or $p\geq -\frac{12}{5\left(
k+2\right) }$.

\textbf{Sufficiency}. We prove the condition $p<0$ or $p\geq -\frac{12}{%
5\left( k+2\right) }$ is sufficient for (\ref{Mh}) to hold.

If $p<0$, then $w\left( x\right) =\left( kE+F\right) \left( p+\frac{G}{kE+F}%
\right) <0$ due to $kE+F>0$ and $G<0$. Hence, from (\ref{sgn(dv)}) we have $%
v^{\prime }\left( x\right) <0$, and then $v\left( x\right)
<\lim_{x\rightarrow 0^{+}}v\left( x\right) =0$. It is derived by (\ref%
{sgn(du)}) that $u^{\prime }\left( x\right) <0$, and so $u\left( x\right)
<\lim_{x\rightarrow 0^{+}}u\left( x\right) =0$.

If $p\geq -\frac{12}{5\left( k+2\right) }$, then by Lemma \ref{Lemma EF/G}
we have%
\begin{equation*}
p+\frac{G}{kE+F}\geq p+\frac{12}{5\left( k+2\right) }>0,
\end{equation*}%
and then%
\begin{equation*}
w\left( x\right) =\left( kE+F\right) \left( p+\frac{G}{kE+F}\right) >0.
\end{equation*}%
From (\ref{sgn(dv)}) we have $v^{\prime }\left( x\right) >0$, and then $%
v\left( x\right) >\lim_{x\rightarrow 0^{+}}v\left( x\right) =0$. It follows
by (\ref{sgn(du)}) that $u^{\prime }\left( x\right) <0$, which implies that $%
u\left( x\right) <\lim_{x\rightarrow 0^{+}}u\left( x\right) =0$.

This completes the proof.
\end{proof}

\section{Applications}

\subsection{Huygens type inequalities}

Letting $k=1$ in Theorem \ref{Main 1} and \ref{Main 2}, we have

\begin{proposition}
\label{Ptk=1}For $x\in \left( 0,\pi /2\right) $, inequality 
\begin{equation}
\frac{2}{3}\left( \frac{\sin x}{x}\right) ^{p}+\frac{1}{3}\left( \frac{\tan x%
}{x}\right) ^{p}>1>\frac{2}{3}\left( \frac{\sin x}{x}\right) ^{q}+\frac{1}{3}%
\left( \frac{\tan x}{x}\right) ^{q}  \label{Pt1}
\end{equation}%
holds if and only if $p>0$ or $p\leq -\frac{\ln 3-\ln 2}{\ln \pi -\ln 2}%
\approx -0.898$ and $-4/5\leq q<0$.
\end{proposition}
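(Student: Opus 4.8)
The plan is to obtain Proposition \ref{Ptk=1} as the direct specialization $k=1$ of Theorems \ref{Main 1} and \ref{Main 2}, treating the two halves of the chain (\ref{Pt1}) separately since they constrain the independent exponents $p$ and $q$.

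First I would set $k=1$ in the inequality (\ref{Mt}). Because the exponent attached to $\sin x/x$ is $kp$, putting $k=1$ collapses it to $p$, while the weights $\tfrac{2}{k+2},\tfrac{k}{k+2}$ become $\tfrac23,\tfrac13$; thus (\ref{Mt}) becomes precisely the left inequality of (\ref{Pt1}). Theorem \ref{Main 1} then asserts that this holds on $(0,\pi/2)$ if and only if $p>0$ or $p\le -\frac{\ln(k+2)-\ln 2}{k(\ln\pi-\ln 2)}$, which at $k=1$ reduces to $p\le -\frac{\ln 3-\ln 2}{\ln\pi-\ln 2}$.

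For the right half of (\ref{Pt1}) I would set $k=1$ in the reverse inequality (\ref{Mtr}), which is exactly the reverse of the left inequality with $q$ in place of $p$; Theorem \ref{Main 2} gives that it holds if and only if $-\frac{12}{5(k+2)}\le q<0$, and at $k=1$ the constant $-\frac{12}{5\cdot 3}=-\tfrac45$, yielding $-\tfrac45\le q<0$. Since the two halves of (\ref{Pt1}) involve $p$ and $q$ independently, conjoining the two specialized conditions completes the argument, the only residual task being the numerical estimate $\frac{\ln 3-\ln 2}{\ln\pi-\ln 2}=\frac{\ln(3/2)}{\ln(\pi/2)}\approx 0.898$ by direct evaluation. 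There is essentially no obstacle: all the analytic content already resides in Theorems \ref{Main 1} and \ref{Main 2}, and the one point worth double-checking is that the substitution $kp\mapsto p$ at $k=1$ forces the two weighted terms to share a common exponent, which is exactly why (\ref{Pt1}) is the single-exponent member of the general family.
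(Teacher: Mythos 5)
Your proposal is correct and is exactly the paper's argument: the paper obtains Proposition \ref{Ptk=1} by simply letting $k=1$ in Theorems \ref{Main 1} and \ref{Main 2}, the weights becoming $\tfrac{2}{3},\tfrac{1}{3}$, the threshold $-\frac{\ln(k+2)-\ln 2}{k(\ln\pi-\ln 2)}$ becoming $-\frac{\ln 3-\ln 2}{\ln\pi-\ln 2}$, and $-\frac{12}{5(k+2)}$ becoming $-\tfrac{4}{5}$, with the two exponents $p$ and $q$ handled independently just as you describe.
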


Let $M_{r}\left( a,b;w\right) $ denote the $r$-th weighted power mean of
positive numbers $a,b>0$ defined by 
\begin{equation}
M_{r}\left( a,b;w\right) :=\left( wa^{r}+\left( 1-w\right) b^{r}\right)
^{1/r}\text{ if }r\neq 0\text{ and }M_{0}\left( a,b;w\right) =a^{w}b^{1-w},
\label{M_r}
\end{equation}%
where $w\in \left( 0,1\right) $.

Since%
\begin{equation*}
\frac{2}{3}\left( \frac{\sin x}{x}\right) ^{p}+\frac{1}{3}\left( \frac{\tan x%
}{x}\right) ^{p}=\frac{\frac{2}{3}+\frac{1}{3}\left( \cos x\right) ^{-p}}{%
\left( \frac{\sin x}{x}\right) ^{-p}},
\end{equation*}%
by Proposition \ref{Ptk=1} the inequality 
\begin{equation*}
\frac{\sin x}{x}>\left( \frac{2}{3}+\frac{1}{3}\left( \cos x\right)
^{-p}\right) ^{-1/p}=M_{-p}\left( 1,\cos x;\tfrac{2}{3}\right) 
\end{equation*}%
holds for $x\in \left( 0,\pi /2\right) $ if and only if $-p\leq 4/5$.
Similarly, its reverse one holds if and only if $-p\geq \frac{\ln 3-\ln 2}{%
\ln \pi -\ln 2}$. The facts cab be stated as a corollary.

\begin{corollary}
\label{Ctyang1}Let $M_{r}\left( a,b;w\right) $ be defined by (\ref{M_r}).
Then for $x\in \left( 0,\pi /2\right) $, the inequalities 
\begin{equation}
M_{\alpha }\left( 1,\cos x;\tfrac{2}{3}\right) <\frac{\sin x}{x}<M_{\beta
}\left( 1,\cos x;\tfrac{2}{3}\right)   \label{Yang1t}
\end{equation}%
hold if and only if $\alpha \leq 4/5$ and $\beta \geq \frac{\ln 3-\ln 2}{\ln
\pi -\ln 2}\approx -0.898$.
\end{corollary}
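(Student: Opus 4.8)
The plan is to present the Corollary as a direct reformulation of Proposition \ref{Ptk=1}, obtained from the algebraic identity displayed just before the statement together with the monotonicity of the power mean in its order. Writing $s=\sin x/x$ and $c=\cos x$, the starting point is the identity
$$\frac{2}{3}\left(\frac{\sin x}{x}\right)^{p}+\frac{1}{3}\left(\frac{\tan x}{x}\right)^{p}=s^{p}\left(\frac{2}{3}+\frac{1}{3}c^{-p}\right)=\left(\frac{s}{M_{-p}(1,\cos x;\tfrac{2}{3})}\right)^{p},$$
valid for $p\neq 0$; the first equality is immediate from $\tan x/x=s/c$, and the second uses $\bigl[M_{-p}(1,\cos x;\tfrac{2}{3})\bigr]^{-p}=\tfrac{2}{3}+\tfrac{1}{3}c^{-p}$ from the definition \eqref{M_r}. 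This recasts the weighted sum in \eqref{Pt1} as the single quantity $(s/M_{-p})^{p}$.

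Since $t>0$ satisfies $t^{p}>1$ exactly when $t>1$ for $p>0$ and when $t<1$ for $p<0$, the inequality $\tfrac{2}{3}(\sin x/x)^{p}+\tfrac{1}{3}(\tan x/x)^{p}>1$ is equivalent to $s>M_{-p}$ when $p>0$ and to $s<M_{-p}$ when $p<0$, with the reverse inequality giving the opposite comparison in each case. Setting $r=-p$, Proposition \ref{Ptk=1} now reads off two sharp endpoint inequalities on $(0,\pi/2)$: taking $p=-4/5$ in the reverse part gives the lower endpoint $s>M_{4/5}(1,\cos x;\tfrac{2}{3})$, and taking $p=-c_{0}$ with $c_{0}=\tfrac{\ln 3-\ln 2}{\ln \pi -\ln 2}$ in the direct part gives the upper endpoint $s<M_{c_{0}}(1,\cos x;\tfrac{2}{3})$.

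It remains to extend these two endpoint inequalities to the full ranges of $\alpha $ and $\beta $, which I would do using the fact that $r\mapsto M_{r}(1,\cos x;\tfrac{2}{3})$ is strictly increasing for each fixed $x\in(0,\pi/2)$ (the power-mean inequality, strict because $\cos x<1$). For $\alpha\le 4/5$ this yields $M_{\alpha}\le M_{4/5}<s$, and for $\beta\ge c_{0}$ it yields $s<M_{c_{0}}\le M_{\beta}$; since the lower and upper bounds constrain $\alpha$ and $\beta$ separately, the two-sided inequality \eqref{Yang1t} holds for all $x$ precisely when both conditions hold. The converse directions, that $\alpha>4/5$ and $\beta<c_{0}$ each fail, follow from the sharpness of the two thresholds in Proposition \ref{Ptk=1} after translating the failure of the relevant weighted-sum inequality back through the identity above. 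I expect the only delicate point to be this sign bookkeeping, namely keeping the direction of the mean comparison correct as $p$ changes sign and checking that the critical orders are exactly $r=4/5$ and $r=c_{0}$; once the monotonicity of $M_{r}$ is in hand, the interior orders—including the geometric-mean case $r=0$, where $M_{0}=(\cos x)^{1/3}$—need no separate treatment.
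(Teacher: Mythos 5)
Your proposal is correct and follows essentially the same route as the paper: the identical identity recasting the weighted sum as $\left( \frac{\sin x}{x}/M_{-p}\left( 1,\cos x;\tfrac{2}{3}\right) \right) ^{p}$, followed by a sign-split translation of the sharp conditions in Proposition \ref{Ptk=1} under the substitution $r=-p$. The only difference is one of bookkeeping --- the paper translates the ``if and only if'' of Proposition \ref{Ptk=1} directly for every order $r$, whereas you translate only at the endpoints $r=4/5$ and $r=\frac{\ln 3-\ln 2}{\ln \pi -\ln 2}$ and then extend by monotonicity of $r\mapsto M_{r}$; this minor refinement in fact treats the orders $r\leq 0$ (where Proposition \ref{Ptk=1} says nothing, since $p=0$ makes the weighted sum identically $1$) more carefully than the paper's terse two-sentence derivation.
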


\begin{remark}
Cusa-Huygens inequality \cite{Huygens.1888-1940} refers to%
\begin{equation}
\frac{\sin x}{x}<\tfrac{2}{3}+\tfrac{1}{3}\cos x  \label{Cusa}
\end{equation}%
holds for $x\in \left( 0,\pi /2\right) $, which is an equivalent one of the
second one in (\ref{Nueman}). As an improvement and generalization,
Corollary \ref{Ctyang1} was proved in \cite{Yang.MIA.2013.inprint} by Yang.
Here we provide a new proof.
\end{remark}

\begin{remark}
Let $a>b>0$ and let $x=\arcsin \frac{a-b}{a+b}\in \left( 0,\pi /2\right) $.
Then $(\sin x)/x=P/A$, $\cos x=G/A$, and then inequalities (\ref{Yang1t})
can be changed into 
\begin{equation}
M_{\alpha }\left( A,G;\tfrac{2}{3}\right) <P<M_{\beta }\left( A,G;\tfrac{2}{3%
}\right) ,  \label{P-A-G}
\end{equation}%
where $P$ is the first Seiffert mean \cite{Seiffert.EM.42.1987} defined by 
\begin{equation*}
P=P\left( a,b\right) =\frac{a-b}{2\arcsin \frac{a-b}{a+b}},
\end{equation*}%
$A$ and $G$ denote the arithmetic and geometric means of $a$ and $b$,
respectively.

Let $x=\arctan \frac{a-b}{a+b}$. Then $(\sin x)/x=T/Q$, $\cos x=A/Q$, and
then inequalities (\ref{Yang1t}) can be changed into 
\begin{equation}
M_{\alpha }\left( Q,A;\tfrac{2}{3}\right) <T<M_{\beta }\left( Q,A;\tfrac{2}{3%
}\right) ,  \label{T-A-Q}
\end{equation}%
where $T$ is the second Seiffert mean \cite{Seiffert.DW.29.1995} defined by 
\begin{equation*}
T=T\left( a,b\right) =\frac{a-b}{2\arctan \frac{a-b}{a+b}},
\end{equation*}%
$Q$ denotes the quadratic mean of $a$ and $b$.

Obviously, by Corollary \ref{Chyang1}, both the two double inequalities (\ref%
{P-A-G}) (see \cite{Yang.MIA.2013.inprint}) and (\ref{T-A-Q}) hold if and
only if $\alpha \leq 4/5$ and $\beta \geq \frac{\ln 3-\ln 2}{\ln \pi -\ln 2}%
\approx -0.898$, in which (\ref{T-A-Q}) seem to be new ones
\end{remark}

In the same way, taking $k=1$ in Theorem \ref{Main 3}

\begin{proposition}
\label{Phk=1}For $x\in \left( 0,\infty \right) $, inequality 
\begin{equation}
\frac{2}{3}\left( \frac{\sinh x}{x}\right) ^{p}+\frac{1}{3}\left( \frac{%
\tanh x}{x}\right) ^{p}>1  \label{Ph1}
\end{equation}%
holds if and only if $p>0$ or $p\leq -\frac{4}{5}$.
\end{proposition}

Similar to Corollary \ref{Ctyang1}, we have

\begin{corollary}
\label{Chyang1}Let $M_{r}\left( a,b;w\right) $ be defined by (\ref{M_r}).
Then for $x\in \left( 0,\infty \right) $, the inequalities 
\begin{equation}
M_{\alpha }\left( 1,\cosh x;\tfrac{2}{3}\right) <\frac{\sinh x}{x}<M_{\beta
}\left( 1,\cosh x;\tfrac{2}{3}\right)   \label{Yang1h}
\end{equation}%
hold if and only if $\alpha \leq 0$ and $\beta \geq 4/5$.
\end{corollary}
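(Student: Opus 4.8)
The plan is to mirror the reduction used for Corollary~\ref{Ctyang1}: rewrite the weighted power mean so that inequality~(\ref{Ph1}) of Proposition~\ref{Phk=1} becomes exactly a comparison between $\sinh x/x$ and $M_{r}(1,\cosh x;\tfrac23)$. Since $\tanh x=\sinh x/\cosh x$, one has for every $p\neq0$
\begin{equation*}
\frac{2}{3}\left(\frac{\sinh x}{x}\right)^{p}+\frac{1}{3}\left(\frac{\tanh x}{x}\right)^{p}=\left(\frac{\sinh x}{x}\right)^{p}\left(\frac{2}{3}+\frac{1}{3}\left(\cosh x\right)^{-p}\right),
\end{equation*}
so that (\ref{Ph1}) is equivalent to $\tfrac23+\tfrac13(\cosh x)^{-p}>(\sinh x/x)^{-p}$. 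Writing $r=-p$, the right member is $(\sinh x/x)^{r}$ and, by the definition (\ref{M_r}), $M_{r}(1,\cosh x;\tfrac23)=\left(\tfrac23+\tfrac13(\cosh x)^{r}\right)^{1/r}$ for $r\neq0$.

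Next I would raise the equivalent inequality to the power $1/r=-1/p$, keeping track of the sign of $r$. If $p>0$, i.e.\ $r<0$, raising to a negative power reverses the inequality and (\ref{Ph1}) becomes $M_{r}(1,\cosh x;\tfrac23)<\sinh x/x$; if $p<0$, i.e.\ $r>0$, it is preserved and (\ref{Ph1}) becomes $M_{r}(1,\cosh x;\tfrac23)>\sinh x/x$. Proposition~\ref{Phk=1} says (\ref{Ph1}) holds on $(0,\infty)$ exactly when $p>0$ or $p\le-\tfrac45$. Feeding this through the equivalence gives at once: for every $\alpha<0$ the lower bound $M_{\alpha}<\sinh x/x$ holds for all $x$ (from the branch $p>0$), and for every $\beta\ge\tfrac45$ the upper bound $\sinh x/x<M_{\beta}$ holds for all $x$ (from $p\le-\tfrac45$, with $r=\beta$). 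The ``only if'' for the upper bound on $0<\beta<\tfrac45$ is likewise immediate: there (\ref{Ph1}) fails for some $x$, hence so does $\sinh x/x<M_{\beta}$.

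It remains to settle the boundary and negative situations that the reduction does not reach. For $\beta\le0$ the upper bound must fail: since $r\mapsto M_{r}$ is increasing, $M_{\beta}\le M_{0}=(\cosh x)^{1/3}$, and a Maclaurin comparison shows $(\cosh x)^{1/3}<\sinh x/x$ near $0$, so $\sinh x/x<M_{\beta}$ cannot hold there. For $\alpha>0$ the lower bound must fail: as $x\to\infty$ one has $M_{\alpha}(1,\cosh x;\tfrac23)\sim 3^{-1/\alpha}\cosh x$, which grows like $\cosh x$, whereas $\sinh x/x\sim \cosh x/x$ grows strictly slower, so $M_{\alpha}>\sinh x/x$ for all large $x$; equivalently this is the relation $\lim_{x\to\infty}u(x)=\infty$ recorded in the Remark following Theorem~\ref{Main 3}, applied with $p=-\alpha<0$. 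Together these force $\alpha\le0$ and $\beta\ge\tfrac45$.

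The one genuinely separate point---the main obstacle---is the boundary exponent $\alpha=0$, where (\ref{M_r}) uses the geometric mean $M_{0}=(\cosh x)^{1/3}$ and Proposition~\ref{Phk=1} (valid only for $p\neq0$) gives no information. Here I must prove the strict inequality $(\cosh x)^{1/3}<\sinh x/x$, i.e.\ $\sinh^{3}x>x^{3}\cosh x$, for every $x>0$; note that letting $r\to0^{-}$ in the already-established family $M_{r}<\sinh x/x$ yields only the non-strict $M_{0}\le\sinh x/x$, so strictness needs an independent argument. This is precisely Lazarevi\'c's inequality, and I would dispatch it by a short monotonicity computation: setting $\phi(x)=\ln(\sinh x/x)-\tfrac13\ln\cosh x$ one checks $\phi(0^{+})=0$ and $\phi'(x)=\coth x-\tfrac1x-\tfrac13\tanh x>0$ on $(0,\infty)$, whence $\phi>0$. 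With the geometric-mean case secured, the lower bound holds for all $\alpha\le0$, completing both ``if'' directions and hence the full characterization.
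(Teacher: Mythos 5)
Your proof is correct, and its backbone --- dividing (\ref{Ph1}) by $(\sinh x/x)^{p}$ so that Proposition \ref{Phk=1} turns into the power-mean comparison --- is exactly the paper's reduction; the paper's entire proof of Corollary \ref{Chyang1} is the phrase ``similar to Corollary \ref{Ctyang1}''. The value of your write-up is that you supply the pieces this analogy cannot deliver, and they are genuinely needed, not pedantic. For necessity: Proposition \ref{Phk=1} rules out the lower bound only for $\alpha\ge 4/5$ (there (\ref{Ph1}) holds everywhere, so its reverse holds nowhere); for $0<\alpha<4/5$ the proposition merely yields a point where (\ref{Ph1}) fails, which is \emph{compatible} with the lower bound rather than contradictory to it, so your asymptotic $M_{\alpha}\left(1,\cosh x;\tfrac{2}{3}\right)\sim 3^{-1/\alpha}\cosh x\gg \sinh x/x$ as $x\to\infty$ is what actually forces $\alpha\le 0$ (and your Maclaurin comparison near $0$ likewise disposes of $\beta\le 0$). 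More importantly, the endpoint $\alpha=0$: in the trigonometric Corollary \ref{Ctyang1} the geometric-mean case lies strictly inside the admissible range $\alpha\le 4/5$ and follows from monotonicity of $r\mapsto M_{r}$, but here $0$ is itself the extreme exponent, the limit $r\to 0^{-}$ gives only the non-strict $M_{0}\le \sinh x/x$, and Proposition \ref{Phk=1} (which requires $p\ne 0$) is silent; the strict inequality $\left(\cosh x\right)^{1/3}<\sinh x/x$ is Lazarevi\'{c}'s inequality, which the paper never proves or invokes in this proof (it sits uncited in the bibliography). Your monotonicity proof of it is sound: $\phi'(x)=\coth x-1/x-\tfrac{1}{3}\tanh x>0$ reduces, after clearing the positive denominators and setting $t=2x$, to $t\left(2+\cosh t\right)>3\sinh t$, which is immediate from power series. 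In short: where the paper has an argument you reproduce it, and where the paper has a gap you close it.
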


\begin{remark}
Let $a>b>0$ and let $x=\ln \sqrt{a/b}$. Then $\left( \sinh x\right) /x=L/G$, 
$\cosh x=A/G$, and then (\ref{Yang1h}) can be changed into 
\begin{equation}
M_{\alpha }\left( G,A;\tfrac{2}{3}\right) <L<M_{\beta }\left( G,A;\tfrac{2}{3%
}\right) ,  \label{L-A-G}
\end{equation}%
where $L$ is the logarithmic means of $a$ and $b$ defined by%
\begin{equation*}
L=L\left( a,b\right) =\frac{a-b}{\ln a-\ln b}.
\end{equation*}%
Making a change of variable $x=\func{arcsinh}\frac{b-a}{a+b}$ yields $(\sinh
x)/x=NS/A$, $\cosh x=Q/A$, where $NS$ is the Nueman-S\'{a}ndor mean defined
by%
\begin{equation*}
NS=NS\left( a,b\right) =\frac{a-b}{2\func{arcsinh}\frac{a-b}{a+b}}.
\end{equation*}%
Thus, (\ref{Yang1h}) is equivalent to%
\begin{equation}
M_{\alpha }\left( A,Q;\tfrac{2}{3}\right) <NS<M_{\beta }\left( A,Q;\tfrac{2}{%
3}\right) .  \label{NS-A-Q}
\end{equation}

Corollary \ref{Chyang1} implies that the inequalities (\ref{L-A-G}) and (\ref%
{NS-A-Q}) hold if and only if $\alpha \leq 0$ and $\beta \geq 4/5$. The
second one in (\ref{NS-A-Q}) is a new one.
\end{remark}

\begin{remark}
It should be pointed out that all inequalities involving $(\sin x)/x\ $and $%
\cos x$ or $(\sinh x)/x\ $and $\cosh x$ in this paper can be changed into
the equivalent ones for means by variable substitutions mentioned
previously. In what follows we no longer mention.
\end{remark}

\subsection{Wilker-Zhu type inequalities}

Letting $k=2$ in Theorem \ref{Main 1} and \ref{Main 2}, we have

\begin{proposition}
\label{Ptk=2}For $x\in \left( 0,\pi /2\right) $, inequality 
\begin{equation}
\left( \frac{\sin x}{x}\right) ^{2p}+\left( \frac{\tan x}{x}\right)
^{p}>2>\left( \frac{\sin x}{x}\right) ^{2q}+\left( \frac{\tan x}{x}\right)
^{q}  \label{Pt2}
\end{equation}%
holds if and only if $p>0$ or $p\leq -\frac{\ln 2}{2\left( \ln \pi -\ln
2\right) }\approx -0.767$ and $-3/5\leq q<0$.
\end{proposition}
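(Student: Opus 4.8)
The plan is to observe that the double inequality (\ref{Pt2}) splits into two logically independent one-sided statements — the left inequality constrains $p$ alone, the right inequality constrains $q$ alone — and that each is a verbatim instance of a main result upon setting $k=2$. First I would note that when $k=2$ the two weights in (\ref{Mt}) coincide, since $\frac{2}{k+2}=\frac{k}{k+2}=\frac12$. Multiplying (\ref{Mt}) through by $2$ therefore turns it into exactly the left-hand inequality of (\ref{Pt2}),
\[
\left(\frac{\sin x}{x}\right)^{2p}+\left(\frac{\tan x}{x}\right)^{p}>2,
\]
and this scaling is reversible, so for every fixed $p$ and every $x\in(0,\pi/2)$ the two inequalities are equivalent.

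Next I would invoke Theorem \ref{Main 1} directly with $k=2$: the inequality (\ref{Mt}) holds on $(0,\pi/2)$ if and only if $p>0$ or $p\le -\frac{\ln(k+2)-\ln 2}{k(\ln\pi-\ln 2)}$. The only computation needed is to simplify this threshold at $k=2$, using $\ln(k+2)-\ln 2=\ln 4-\ln 2=\ln 2$, which yields the stated bound $-\frac{\ln 2}{2(\ln\pi-\ln 2)}\approx -0.767$. This settles the characterization of the admissible $p$ for the first half of (\ref{Pt2}).

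For the reverse half I would repeat the argument with Theorem \ref{Main 2}. Setting $k=2$ in (\ref{Mtr}) and again multiplying by $2$ gives precisely
\[
\left(\frac{\sin x}{x}\right)^{2q}+\left(\frac{\tan x}{x}\right)^{q}<2,
\]
the right-hand inequality of (\ref{Pt2}). Theorem \ref{Main 2} asserts that (\ref{Mtr}) holds if and only if $-\frac{12}{5(k+2)}\le p<0$, and substituting $k=2$ collapses the threshold to $-\frac{12}{20}=-\frac35$, giving the condition $-3/5\le q<0$. Conjoining the two characterizations then produces exactly the stated necessary-and-sufficient condition.

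Since everything reduces to a substitution into already-proved theorems, there is essentially no analytic obstacle. The only point requiring care is the bookkeeping that confirms the numerical thresholds $-\frac{\ln 2}{2(\ln\pi-\ln 2)}$ and $-\frac35$ genuinely arise from the general formulas at $k=2$, and that the normalization factor $2$ used to clear the common weight $\frac12$ preserves the direction of each inequality.
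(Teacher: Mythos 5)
Your proposal is correct and matches the paper exactly: the paper proves this proposition simply by setting $k=2$ in Theorems \ref{Main 1} and \ref{Main 2}, which is precisely your argument, with the threshold simplifications $\ln 4-\ln 2=\ln 2$ and $\tfrac{12}{5\cdot 4}=\tfrac{3}{5}$ carried out the same way. Your extra remark that clearing the common weight $\tfrac12$ is a reversible, order-preserving scaling is implicit in the paper but harmless to state.
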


Note that 
\begin{equation*}
\frac{\left( \frac{\sin x}{x}\right) ^{2p}+\left( \frac{\tan x}{x}\right)
^{p}-2}{\left( \frac{\sin x}{x}\right) ^{p}+\frac{\sqrt{8+\cos ^{-2p}x}+\cos
^{-p}x}{2}}=\left( \frac{x}{\sin x}\right) ^{-p}-\frac{\sqrt{8+\cos ^{-2p}x}%
-\cos ^{-p}x}{2},
\end{equation*}%
by Proposition \ref{Ptk=2} the inequality%
\begin{equation*}
\frac{x}{\sin x}>\left( \frac{\sqrt{8+\cos ^{-2p}x}-\cos ^{-p}x}{2}\right)
^{-1/p}
\end{equation*}%
or 
\begin{equation*}
\frac{\sin x}{x}<\left( \frac{\sqrt{8+\cos ^{-2p}x}+\cos ^{-p}x}{4}\right)
^{-1/p}:=H_{-p}\left( \cos x\right)
\end{equation*}%
holds for $x\in \left( 0,\pi /2\right) $ if and only if $-p\geq \frac{\ln 2}{%
2\left( \ln \pi -\ln 2\right) }$, where $H_{r}$ is defined on $\left(
0,\infty \right) $ by 
\begin{equation}
H_{r}\left( t\right) =\left( \frac{\sqrt{8+t^{2r}}+t^{r}}{4}\right) ^{1/r}%
\text{ if }r\neq 0\text{ and }H_{0}\left( t\right) =\sqrt[3]{t}\text{.}
\label{H_r}
\end{equation}%
Likewise, its reverse one holds if and only if $-p\leq 3/5$. This result cab
be stated as a corollary.

\begin{corollary}
\label{Ctyang2}Let $H_{r}\left( t\right) $ be defined by (\ref{H_r}). Then
for $x\in \left( 0,\pi /2\right) $, the inequalities 
\begin{equation}
H_{\alpha }\left( \cos x\right) <\frac{\sin x}{x}<H_{\beta }\left( \cos
x\right)  \label{Yang2t}
\end{equation}%
are true if and only if $\alpha \leq 3/5$ and $\beta \geq \frac{\ln 2}{%
2\left( \ln \pi -\ln 2\right) }\approx 0.767$.
\end{corollary}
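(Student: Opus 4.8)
The plan is to deduce Corollary \ref{Ctyang2} directly from Proposition \ref{Ptk=2} through the algebraic identity displayed just above it, so that the whole content is a change of notation plus careful sign bookkeeping. Writing $s=\left(\sin x/x\right)^{p}$ and $c=\left(\cos x\right)^{-p}$, I would first record the factorization
\[
\left(\frac{\sin x}{x}\right)^{2p}+\left(\frac{\tan x}{x}\right)^{p}-2=s^{2}+cs-2=\left(s-B\right)\left(s+A\right),
\]
where $A=\tfrac12\left(\sqrt{8+c^{2}}+c\right)$ and $B=\tfrac12\left(\sqrt{8+c^{2}}-c\right)$ satisfy $AB=2$ and $A-B=c$. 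Since $s+A>0$ on $\left(0,\pi/2\right)$, the sign of the left-hand side is that of $s-B$; and because $B^{1/p}=H_{-p}\left(\cos x\right)$ by the definition (\ref{H_r}) (using $AB=2$), raising $s\gtrless B$ to the power $1/p$ gives the sign dictionary
\[
\left(\frac{\sin x}{x}\right)^{2p}+\left(\frac{\tan x}{x}\right)^{p}-2\quad\text{has the same sign as}\quad\limfunc{sgn}(p)\left(\frac{\sin x}{x}-H_{-p}\left(\cos x\right)\right).
\]

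Next I would translate the two ``if and only if'' statements of Proposition \ref{Ptk=2} through this dictionary with a short split on the sign of the exponent. For the right-hand inequality set $\beta=-p$: when $\beta>0$ (so $p<0$) the relation $\sin x/x<H_{\beta}\left(\cos x\right)$ is equivalent to the direct inequality (\ref{Pt2}), which by Proposition \ref{Ptk=2} holds on $\left(0,\pi/2\right)$ exactly for $p\le-\tfrac{\ln 2}{2(\ln\pi-\ln 2)}$, i.e. $\beta\ge\tfrac{\ln 2}{2(\ln\pi-\ln 2)}$; when $\beta<0$ (so $p>0$) the dictionary together with the unconditional validity of (\ref{Pt2}) for $p>0$ forces $\sin x/x>H_{\beta}\left(\cos x\right)$, so the right-hand inequality fails. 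For the left-hand inequality set $\alpha=-p$: when $\alpha<0$ (so $p>0$) it reduces to (\ref{Pt2}), valid for every $p>0$ and hence for every such $\alpha$; when $\alpha>0$ (so $p<0$) it reduces to the reversed inequality in (\ref{Pt2}), which by Proposition \ref{Ptk=2} holds exactly for $-3/5\le p<0$, i.e. $0<\alpha\le 3/5$. Assembling the two families yields that $H_{\alpha}\left(\cos x\right)<\sin x/x$ for all $x$ iff $\alpha\le 3/5$, and $\sin x/x<H_{\beta}\left(\cos x\right)$ for all $x$ iff $\beta\ge\tfrac{\ln 2}{2(\ln\pi-\ln 2)}$, which is the claim.

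The only values the dictionary does not reach directly are the boundary exponents $\alpha=\beta=0$ (it was derived for $p\ne 0$), so I would dispose of them separately: here $H_{0}\left(\cos x\right)=\sqrt[3]{\cos x}$, and the elementary inequality $\cos x<\left(\sin x/x\right)^{3}$ (a one-line power-series or monotonicity check, morally the $p\to 0$ behaviour of (\ref{Pt2})) puts $0$ on the correct side, so $\alpha=0$ is admissible on the left and $\beta=0$ is excluded on the right. The step I expect to require the most care, and the one on which the conclusion hinges, is the reversal of inequality direction when taking $1/p$-th powers: the factor $\limfunc{sgn}(p)$ is precisely what swaps the roles of the direct and reversed forms of (\ref{Pt2}) for positive versus negative exponents, and tracking it correctly is what makes the thresholds emerge as the one-sided bounds $\alpha\le 3/5$ and $\beta\ge\tfrac{\ln 2}{2(\ln\pi-\ln 2)}$ rather than their opposites. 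One may optionally remark that $r\mapsto H_{r}(t)$ is increasing for fixed $t\in\left(0,1\right)$, which makes the passage from the threshold exponent to the full half-line transparent, but this is not strictly needed, since Proposition \ref{Ptk=2} already provides an equivalence valid over the entire range of $p$.
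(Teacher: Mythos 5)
Your proof is correct and takes essentially the same route as the paper: the paper's displayed identity before the corollary is exactly your factorization $s^{2}+cs-2=(s-B)(s+A)$ with $B^{1/p}=H_{-p}(\cos x)$, followed by the same translation of Proposition \ref{Ptk=2} according to the sign of $p$. Your explicit handling of the boundary cases $\alpha=0$, $\beta=0$ and $\beta<0$ (via $\cos x<(\sin x/x)^{3}$) is in fact more careful than the paper's terse two-line derivation, but the underlying argument is identical.
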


Taking $k=2$ in Theorem \ref{Main 3}, we have

\begin{proposition}
\label{Phk=2}For $x\in \left( 0,\infty \right) $, the inequality 
\begin{equation*}
\left( \frac{\sinh x}{x}\right) ^{2p}+\left( \frac{\tanh x}{x}\right) ^{p}>2
\end{equation*}%
holds if and only if $p>0$ or $p\leq -3/5$.
\end{proposition}

In a similar way, we get

\begin{corollary}
\label{Chyang2}Let $H_{r}\left( t\right) $ be defined by (\ref{H_r}). Then
for $x\in \left( 0,\infty \right) $, the inequalities 
\begin{equation}
H_{\alpha }\left( \cosh x\right) <\frac{\sinh x}{x}<H_{\beta }\left( \cosh
x\right)  \label{Yang2h}
\end{equation}%
are true if and only if $\alpha \leq 0$ and $\beta \geq 3/5$.
\end{corollary}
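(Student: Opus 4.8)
The plan is to follow the derivation of Corollary \ref{Ctyang2} almost verbatim, transferring it from the circular to the hyperbolic setting and invoking Proposition \ref{Phk=2} in place of Proposition \ref{Ptk=2}. Write $S=(\sinh x)/x$ and $C=\cosh x$, so that $(\tanh x)/x=S/C$ and $C>1$ on $(0,\infty)$. For $p\neq0$ the first step is to view the Wilker--Zhu expression as a quadratic in $S^{p}$: since $H_{-p}(C)^{p}=4/(\sqrt{8+C^{-2p}}+C^{-p})$ is exactly the positive root of $y^{2}+C^{-p}y-2$, one obtains the factorization
\begin{equation*}
\left( \frac{\sinh x}{x}\right) ^{2p}+\left( \frac{\tanh x}{x}\right) ^{p}-2=\left( S^{p}-H_{-p}(C)^{p}\right) \left( S^{p}+\tfrac{\sqrt{8+C^{-2p}}+C^{-p}}{2}\right),
\end{equation*}
whose second factor is strictly positive. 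Consequently the sign of the left-hand side is that of $S^{p}-H_{-p}(C)^{p}$, so for $p>0$ the Wilker--Zhu inequality ``$>2$'' is equivalent to $S>H_{-p}(C)$, while for $p<0$ it is equivalent to $S<H_{-p}(C)$.

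Setting $r=-p$, I would then read off the two one-sided bounds. The upper estimate $(\sinh x)/x<H_{\beta}(\cosh x)$ with $\beta=r>0$ (i.e.\ $p<0$) is, by the equivalence above, the hyperbolic Wilker--Zhu inequality, which by Proposition \ref{Phk=2} holds throughout $(0,\infty)$ precisely when $p\le-3/5$, that is $\beta\ge3/5$. The lower estimate $(\sinh x)/x>H_{\alpha}(\cosh x)$ with $\alpha=r<0$ (i.e.\ $p>0$) is likewise the Wilker--Zhu inequality, which Proposition \ref{Phk=2} guarantees for every $p>0$; hence the lower bound holds for all $\alpha<0$. This already gives sufficiency of $\alpha\le0$ (modulo $\alpha=0$) and of $\beta\ge3/5$.

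It remains to settle the sharpness and the boundary case, which is where the work lies. For $\beta<3/5$ the upper bound fails: when $0<\beta<3/5$ it is equivalent to the Wilker--Zhu inequality for some $p\in(-3/5,0)$, excluded by Proposition \ref{Phk=2}; when $\beta\le0$ it is either the reversed Lazarevi\'{c} inequality ($\beta=0$) or equivalent to the reverse Wilker--Zhu inequality for $p>0$, both false. For $\alpha>0$ (so $p<0$) the lower bound is, by the displayed equivalence, the \emph{reverse} hyperbolic Wilker--Zhu inequality; but the Remark following Theorem \ref{Main 3} gives $u(x)\to\infty$ as $x\to\infty$ for every $p\neq0$, so that reverse inequality fails for all large $x$, and the lower bound fails. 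This fixes the threshold at exactly $\alpha\le0$. The one genuinely separate case, and the main obstacle, is $\alpha=0$: there $H_{0}(\cosh x)=\sqrt[3]{\cosh x}$ and the required bound is Lazarevi\'{c}'s inequality $\sqrt[3]{\cosh x}<(\sinh x)/x$, which is classical. If one prefers a self-contained argument, passing to the limit $\alpha\to0^{-}$ in the bounds already proved yields $\sqrt[3]{\cosh x}\le(\sinh x)/x$ by continuity of $r\mapsto H_{r}(\cosh x)$, and strictness can then be checked directly (near $0$ via $(\sinh x)/x=1+x^{2}/6+x^{4}/120+\cdots$ versus $\sqrt[3]{\cosh x}=1+x^{2}/6-x^{4}/72+\cdots$). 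Collecting all cases yields the stated equivalence: the two-sided estimate holds iff $\alpha\le0$ and $\beta\ge3/5$.
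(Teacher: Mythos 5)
Your proposal is correct and follows essentially the same route the paper intends: your quadratic factorization of $\left(\frac{\sinh x}{x}\right)^{2p}+\left(\frac{\tanh x}{x}\right)^{p}-2$ in the variable $S^{p}$ is exactly the hyperbolic analogue of the identity the paper displays before Corollary \ref{Ctyang2}, and the thresholds are then read off from Proposition \ref{Phk=2}, which is precisely what the paper's ``In a similar way, we get'' abbreviates. If anything, you are more thorough than the paper, since you explicitly settle the endpoint $\alpha=0$ (Lazarevi\'{c}'s inequality) and the failure of the lower bound for $\alpha>0$ via the remark following Theorem \ref{Main 3}, details the paper leaves implicit.
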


Now we give a generalization of inequalities (\ref{Zhwt}) given by Zhu \cite%
{Zhu.CMA.58.2009}

\begin{proposition}
\label{PtZhug}For fixed $k\geq 1$, both the chains of inequalities 
\begin{eqnarray}
\tfrac{2}{k+2}\left( \tfrac{\sin x}{x}\right) ^{kp}+\tfrac{k}{k+2}\left( 
\tfrac{\tan x}{x}\right) ^{p} &\geq &\tfrac{k}{k+2}\left( \tfrac{\sin x}{x}%
\right) ^{kp}+\tfrac{2}{k+2}\left( \tfrac{\tan x}{x}\right) ^{p}
\label{Yang3t} \\
&>&\tfrac{2}{k+2}\left( \tfrac{x}{\sin x}\right) ^{kp}+\tfrac{k}{k+2}\left( 
\tfrac{x}{\tan x}\right) ^{p}>1,  \notag \\
\tfrac{2}{k+2}\left( \tfrac{\sin x}{x}\right) ^{kp}+\tfrac{k}{k+2}\left( 
\tfrac{\tan x}{x}\right) ^{p} &>&\tfrac{2}{k+2}\left( \tfrac{x}{\tan x}%
\right) ^{p}+\tfrac{k}{k+2}\left( \tfrac{x}{\sin x}\right) ^{kp}
\label{Yang4t} \\
&\geq &\tfrac{2}{k+2}\left( \tfrac{x}{\sin x}\right) ^{kp}+\tfrac{k}{k+2}%
\left( \tfrac{x}{\tan x}\right) ^{p}>1  \notag
\end{eqnarray}%
hold for $x\in \left( 0,\pi /2\right) $ if and only if $k\geq 2$ and $p\geq 
\frac{\ln \left( k+2\right) -\ln 2}{k\left( \ln \pi -\ln 2\right) }$.
\end{proposition}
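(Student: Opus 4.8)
The plan is to work in the regime $p>0$ (shown below to be forced) and abbreviate $a=\left(\sin x/x\right)^{kp}$ and $b=\left(\tan x/x\right)^{p}$, so that $\left(x/\sin x\right)^{kp}=a^{-1}$ and $\left(x/\tan x\right)^{p}=b^{-1}$. Since $\sin x/x<1<\tan x/x$ on $\left(0,\pi/2\right)$ and $p>0$, we have $a<1<b$ and $a^{-1}>1>b^{-1}$. Each chain splits into four links that I would treat separately. The two outer ``$\geq$'' links are purely algebraic: the first link of (\ref{Yang3t}) is $\tfrac{2}{k+2}a+\tfrac{k}{k+2}b\geq\tfrac{k}{k+2}a+\tfrac{2}{k+2}b$, which rearranges to $\tfrac{k-2}{k+2}\left(b-a\right)\geq0$ and holds exactly because $k\geq2$ and $b>a$; symmetrically the ``$\geq$'' link of (\ref{Yang4t}) rearranges to $\tfrac{k-2}{k+2}\left(a^{-1}-b^{-1}\right)\geq0$ and uses $a^{-1}>b^{-1}$. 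These are the links that consume the hypothesis $k\geq2$, and they become equalities precisely at $k=2$.

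The innermost link, $\tfrac{2}{k+2}a^{-1}+\tfrac{k}{k+2}b^{-1}>1$, is nothing but inequality (\ref{Mt}) of Theorem \ref{Main 1} read with $p$ replaced by $-p$: that theorem gives the strict inequality iff $-p>0$ or $-p\leq-p_{0}$, where $p_{0}=\tfrac{\ln\left(k+2\right)-\ln2}{k\left(\ln\pi-\ln2\right)}$, i.e. in the regime $p>0$ iff $p\geq p_{0}$. Thus the innermost link matches the stated threshold exactly, and Lemma \ref{Lemma 3} (placing $p_{0}$ strictly between $\tfrac{12}{5(k+2)}$ and $1$) confirms this is the binding constraint and is compatible with the remaining estimates. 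Necessity would then run these observations backwards: the first link forces $k\geq2$, while letting $x\to\left(\pi/2\right)^{-}$ in the innermost link forces $\tfrac{2}{k+2}\left(\pi/2\right)^{kp}\geq1$, which solves to $p\geq p_{0}$; the regime $p<0$ is excluded because there the orderings of $a,b$ reverse, turning the outer links into the requirement $k\leq2$ while the middle links (below) then fail near $x=0$, so $p>0$ is the only consistent regime.

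This leaves the two middle strict links, which are the crux. The second link of (\ref{Yang3t}) is $\tfrac{k}{k+2}a+\tfrac{2}{k+2}b>\tfrac{2}{k+2}a^{-1}+\tfrac{k}{k+2}b^{-1}$ and the first link of (\ref{Yang4t}) is $\tfrac{2}{k+2}a+\tfrac{k}{k+2}b>\tfrac{k}{k+2}a^{-1}+\tfrac{2}{k+2}b^{-1}$. Clearing denominators, both collapse to the single product condition $ab>1$: indeed $\left(k+2\right)\bigl(\tfrac{k}{k+2}a+\tfrac{2}{k+2}b-\tfrac{2}{k+2}a^{-1}-\tfrac{k}{k+2}b^{-1}\bigr)=\left(ab-1\right)\bigl(\tfrac{k}{b}+\tfrac{2}{a}\bigr)$, and the other link equals $\left(ab-1\right)\bigl(\tfrac{2}{b}+\tfrac{k}{a}\bigr)$, the second factors being positive. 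Since $ab=\bigl[\left(\sin x/x\right)^{k}\left(\tan x/x\right)\bigr]^{p}$ and $p>0$, both middle links are therefore equivalent to the purely trigonometric inequality $\left(\sin x/x\right)^{k+1}>\cos x$, i.e. $\sin x/x>\left(\cos x\right)^{1/(k+1)}$. Establishing this Adamovi\'{c}--Mitrinovi\'{c}-type inequality on $\left(0,\pi/2\right)$ is the main obstacle, and I would attack it in the spirit of Lemma \ref{Lemma AB/C}: form the difference (or the ratio $\ln\left(\sin x/x\right)/\ln\cos x$) and prove the required sign/monotonicity by reducing, after differentiation, to the Wilker inequality (\ref{W}). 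I expect this step to control the admissible range of $k$ sharply --- the borderline expansion $\left(\sin x/x\right)^{k+1}-\cos x=\tfrac{2-k}{6}x^{2}+O\left(x^{4}\right)$ shows the comparison is most delicate near $k=2$ --- so this monotonicity argument must be carried out with care and is where essentially all the analytic content of the proposition resides.
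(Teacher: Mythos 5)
Your decomposition coincides step for step with the paper's own proof: the outer ``$\geq$'' links are handled algebraically and (for $p>0$) force $k\geq 2$; the innermost link is Theorem \ref{Main 1} read with $-p$ in place of $p$, giving the threshold $p\geq\frac{\ln \left( k+2\right) -\ln 2}{k\left( \ln \pi -\ln 2\right) }$; and both middle links, after clearing denominators, collapse to $ab>1$, i.e.\ (since $p>0$) to the inequality $\left( \frac{\sin x}{x}\right) ^{k+1}>\cos x$ on $\left( 0,\pi /2\right) $. The genuine gap is that you never prove this last inequality: you only announce that you ``would attack it in the spirit of Lemma \ref{Lemma AB/C}.'' Since, as you say yourself, this is where essentially all the analytic content resides, the proposal is not a proof.

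Worse, the gap cannot be closed, and your own computation shows why: the expansion $\left( \frac{\sin x}{x}\right) ^{k+1}-\cos x=\frac{2-k}{6}x^{2}+O\left( x^{4}\right) $ is \emph{negative} near $x=0$ as soon as $k>2$, so the inequality you need fails there. The sharp statement is the Adamovi\'{c}--Mitrinovi\'{c} inequality: $\left( \frac{\sin x}{x}\right) ^{\lambda }>\cos x$ holds on $\left( 0,\pi /2\right) $ if and only if $\lambda \leq 3$, i.e.\ $k\leq 2$, not $k\geq 2$. Hence, for $p>0$, the middle links hold for all $x$ iff $k\leq 2$, which together with the requirement $k\geq 2$ coming from the outer links leaves only $k=2$; the proposition as stated is false for every $k>2$. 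Concretely, at $k=4$, $p=1$ (which exceeds the stated threshold $\approx 0.608$) and $x=0.1$, the middle member of (\ref{Yang3t}) is $\approx 0.99668$ while the following member is $\approx 1.00001$, violating the claimed strict inequality. The paper's proof contains exactly this defect: it asserts without argument that $\left( \left( \frac{\sin x}{x}\right) ^{k+1}\frac{1}{\cos x}\right) ^{p}>1$ holds iff $k+1\geq 3$ and $p\geq 0$, which reverses the Adamovi\'{c}--Mitrinovi\'{c} threshold. So your hesitation at this step in fact flags a real error in the paper: only the case $k=2$ (Corollary \ref{Ctyang3}, Zhu's inequalities, with $p\geq \frac{\ln 2}{2\left( \ln \pi -\ln 2\right) }$) survives, and any correct write-up must restate the proposition accordingly rather than try to push the argument through for $k>2$.
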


\begin{proof}
The first inequality in (\ref{Yang3t}) is equivalent to 
\begin{eqnarray*}
&&\frac{2}{k+2}\left( \frac{\sin x}{x}\right) ^{kp}+\frac{k}{k+2}\left( 
\frac{\tan x}{x}\right) ^{p}-\frac{k}{k+2}\left( \frac{\sin x}{x}\right)
^{kp}-\frac{2}{k+2}\left( \frac{\tan x}{x}\right) ^{p} \\
&=&\frac{k-2}{k+2}\left( \left( \frac{\tan x}{x}\right) ^{p}-\left( \frac{%
\sin x}{x}\right) ^{kp}\right) >0.
\end{eqnarray*}%
Due to $\frac{\tan x}{x}>1$ and $\frac{\sin x}{x}<1$, it holds for $x\in
\left( 0,\pi /2\right) $ if and only if 
\begin{equation*}
\left( k,p\right) \in \{k\geq 2,p>0\}\cup \{1\leq k\leq 2,p<0\}:=\Omega _{1}.
\end{equation*}

The second one is equivalent to 
\begin{equation*}
\frac{\frac{k}{k+2}\left( \frac{\sin x}{x}\right) ^{kp}+\frac{2}{k+2}\left( 
\frac{\tan x}{x}\right) ^{p}}{\frac{2}{k+2}\left( \frac{x}{\sin x}\right)
^{kp}+\frac{k}{k+2}\left( \frac{x}{\tan x}\right) ^{p}}>1,
\end{equation*}
which can be simplified to%
\begin{equation*}
\left( \frac{\sin x}{x}\right) ^{kp}\left( \frac{\tan x}{x}\right)
^{p}=\left( \left( \frac{\sin x}{x}\right) ^{k+1}\frac{1}{\cos x}\right)
^{p}>1.
\end{equation*}%
It is true for $x\in \left( 0,\pi /2\right) $ if and only if $\left(
k,p\right) \in \{k+1\geq 3,p\geq 0\}:=\Omega _{2}$.

By Theorem \ref{Main 1}, the third one in (\ref{Yang3t}) holds for $x\in
\left( 0,\pi /2\right) $ if and only if 
\begin{equation*}
\left( k,p\right) \in \{k\geq 1,-p>0\}\cup \{k\geq 1,-p\leq -\frac{\ln
\left( k+2\right) -\ln 2}{k\left( \ln \pi -\ln 2\right) }\}:=\Omega _{3}.
\end{equation*}

Hence, inequalities (\ref{Yang3t}) hold for $x\in \left( 0,\pi /2\right) $
if and only if 
\begin{equation*}
\left( k,p\right) \in \Omega _{1}\cap \Omega _{2}\cap \Omega _{3}=\{k\geq
2,p\geq \frac{\ln \left( k+2\right) -\ln 2}{k\left( \ln \pi -\ln 2\right) }%
\},
\end{equation*}
which proves (\ref{Yang3t}).

In the same way, we can prove (\ref{Yang4t}), of which details are omitted.
\end{proof}

Letting $k=2$ in Proposition \ref{PtZhug} we have

\begin{corollary}
\label{Ctyang3}For $x\in \left( 0,\pi /2\right) $, the inequalities (\ref%
{Zhwt}) hold if and only if $p\geq \frac{\ln 2}{2\left( \ln \pi -\ln
2\right) }\approx 0.767$.
\end{corollary}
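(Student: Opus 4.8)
The plan is to obtain this corollary as the special case $k=2$ of Proposition \ref{PtZhug}. First I would observe that when $k=2$ the two weights coincide, namely $\frac{2}{k+2}=\frac{k}{k+2}=\frac{1}{2}$. Consequently the leading inequality in the chain (\ref{Yang3t}) (and likewise the middle inequality in (\ref{Yang4t})) collapses to an equality, since interchanging the two equal coefficients leaves the expression unchanged. What remains of either chain, after multiplying through by $2$, is precisely
\[
\left( \frac{\sin x}{x}\right)^{2p}+\left( \frac{\tan x}{x}\right)^{p}>\left( \frac{x}{\sin x}\right)^{2p}+\left( \frac{x}{\tan x}\right)^{p}>2,
\]
which is exactly (\ref{Zhwt}). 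Thus the validity of (\ref{Zhwt}) on $\left( 0,\pi /2\right)$ is equivalent to the validity of the chains in Proposition \ref{PtZhug} at $k=2$.

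Next I would transcribe the necessary-and-sufficient condition from Proposition \ref{PtZhug} at $k=2$. The requirement $k\geq 2$ is met (with equality), so only the bound on $p$ survives. Evaluating the threshold gives
\[
\left. \frac{\ln \left( k+2\right) -\ln 2}{k\left( \ln \pi -\ln 2\right) }\right| _{k=2}=\frac{\ln 4-\ln 2}{2\left( \ln \pi -\ln 2\right) }=\frac{\ln 2}{2\left( \ln \pi -\ln 2\right) },
\]
using $\ln 4-\ln 2=\ln 2$. Hence (\ref{Zhwt}) holds if and only if $p\geq \frac{\ln 2}{2\left( \ln \pi -\ln 2\right) }$, as claimed.

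There is essentially no obstacle here: once the coincidence of weights at $k=2$ is noted, the corollary is a direct specialization, and the only point requiring a moment's care is confirming that the extremal inequality in each chain degenerates to an equality rather than to a strict inequality, so that no condition is lost in passing from Proposition \ref{PtZhug} to (\ref{Zhwt}).
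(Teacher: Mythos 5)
Your proposal is correct and follows exactly the paper's route: the paper likewise obtains Corollary \ref{Ctyang3} by setting $k=2$ in Proposition \ref{PtZhug}, where the equal weights $\tfrac{2}{k+2}=\tfrac{k}{k+2}=\tfrac12$ make the chains (\ref{Yang3t}) and (\ref{Yang4t}) collapse to (\ref{Zhwt}) after multiplying by $2$, with threshold $\frac{\ln 4-\ln 2}{2(\ln\pi-\ln 2)}=\frac{\ln 2}{2(\ln\pi-\ln 2)}$. Your extra care in checking that the degenerate step is an equality (so the equivalence with (\ref{Zhwt}) loses nothing) is a welcome, if minor, elaboration of what the paper leaves implicit.
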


Similarly, using Theorem \ref{Main 3} we easily prove the following

\begin{proposition}
\label{PhZhug}For fixed $k\geq 1$, the inequalities 
\begin{equation}
\tfrac{k}{k+2}\left( \tfrac{\sinh x}{x}\right) ^{kp}+\tfrac{2}{k+2}\left( 
\tfrac{\tanh x}{x}\right) ^{p}>\tfrac{2}{k+2}\left( \tfrac{x}{\sinh x}%
\right) ^{kp}+\tfrac{k}{k+2}\left( \tfrac{x}{\tanh x}\right) ^{p}>1
\label{Yang3h}
\end{equation}%
hold $x\in \left( 0,\infty \right) $ if and only if $k\geq 2$ and $p\geq 
\frac{12}{5\left( k+2\right) }$.
\end{proposition}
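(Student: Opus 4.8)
The plan is to read the chain (\ref{Yang3h}) as the conjunction of its two links and to characterize each one separately, exactly as in the proof of Proposition \ref{PtZhug}. Abbreviate $a=\left( \sinh x/x\right) ^{kp}$ and $b=\left( \tanh x/x\right) ^{p}$, so that $\left( x/\sinh x\right) ^{kp}=a^{-1}$ and $\left( x/\tanh x\right) ^{p}=b^{-1}$; note that $a,b>0$ and that, for $x>0$, one has $\sinh x/x>1>\tanh x/x$. The chain holds for all $x>0$ if and only if each link holds for all $x>0$, so the final answer will be the intersection of two conditions on $\left( k,p\right) $.

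First I would dispose of the right-hand link $\frac{2}{k+2}a^{-1}+\frac{k}{k+2}b^{-1}>1$. Replacing $p$ by $-p$ turns it into exactly the inequality (\ref{Mh}), so Theorem \ref{Main 3} shows that it holds for all $x>0$ if and only if $-p>0$ or $-p\leq -\frac{12}{5\left( k+2\right) }$, that is, if and only if $p<0$ or $p\geq \frac{12}{5\left( k+2\right) }$.

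For the left-hand link $\frac{k}{k+2}a+\frac{2}{k+2}b>\frac{2}{k+2}a^{-1}+\frac{k}{k+2}b^{-1}$, I would clear denominators and factor, using $ka+2b-2a^{-1}-kb^{-1}=\left( ab-1\right) \left( \frac{k}{b}+\frac{2}{a}\right) $. The second factor is positive, so this link is equivalent to $ab>1$, i.e. to $\left[ \left( \sinh x/x\right) ^{k+1}\cosh ^{-1}x\right] ^{p}>1$. Writing $\psi \left( x\right) =\left( \sinh x/x\right) ^{k+1}/\cosh x$, I must decide when $\psi \left( x\right) ^{p}>1$ for every $x>0$. Since $\sinh x/x>1$, for $p>0$ this amounts to $\psi >1$, i.e. $\left( \sinh x/x\right) ^{k+1}>\cosh x$, which by Lazarević's inequality (the hyperbolic analogue of Adamović--Mitrinović, with sharp exponent $3$) holds throughout $\left( 0,\infty \right) $ if and only if $k+1\geq 3$. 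For $p<0$ one would need $\psi <1$ everywhere, which is impossible because $\psi \left( x\right) \rightarrow \infty $ as $x\rightarrow \infty $; and $p=0$ gives equality. Hence the left link holds for all $x>0$ if and only if $k\geq 2$ and $p>0$, the necessity of $k\geq 2$ also being visible directly from the expansion $\ln \psi \left( x\right) =\frac{k-2}{6}x^{2}+O\left( x^{4}\right) $, which forces $\psi <1$ near $0$ when $k<2$.

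Finally I would intersect the two characterizations: $\{k\geq 2,\ p>0\}$ together with $\{p<0\}\cup \{p\geq \frac{12}{5\left( k+2\right) }\}$ yields $k\geq 2$ and $p\geq \frac{12}{5\left( k+2\right) }$, since $p>0$ kills the branch $p<0$ and the surviving constraint is compatible with $p>0$ because $\frac{12}{5\left( k+2\right) }>0$. The main obstacle is the sharp range for the left link: the reduction to $ab>1$ is elementary algebra and the right link is a direct appeal to Theorem \ref{Main 3}, but pinning the cutoff at $k\geq 2$ rests entirely on the sharp form of Lazarević's inequality, whose threshold exponent $3$ is precisely what produces the condition $k+1\geq 3$.
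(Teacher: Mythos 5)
Your proposal is correct and is essentially the paper's own argument: the paper proves Proposition \ref{PhZhug} by repeating the proof of Proposition \ref{PtZhug}, i.e., splitting (\ref{Yang3h}) into its two links, reducing the first link to $\left( \left( \sinh x/x\right) ^{k+1}/\cosh x\right) ^{p}>1$ (whose sharp threshold $k+1\geq 3$ is exactly Lazarevi\'{c}'s inequality), characterizing the second link via Theorem \ref{Main 3} with $p$ replaced by $-p$, and intersecting the resulting parameter sets. Your factorization $ka+2b-2a^{-1}-kb^{-1}=\left( ab-1\right) \left( k/b+2/a\right) $ is only a cosmetic variant of the paper's ratio computation, and your explicit treatment of $p\leq 0$ and of the small-$x$ expansion just fills in details the paper leaves implicit.
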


Letting $k=2$ in Proposition \ref{PhZhug} we have

\begin{corollary}
\label{Chyang3}For $x\in \left( 0,\infty \right) $, the inequalities (\ref%
{Zhwh}) hold if and only if $p\geq 3/5$.
\end{corollary}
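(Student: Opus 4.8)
The plan is to derive the corollary directly from Proposition \ref{PhZhug} by specializing to $k = 2$, since that value makes the threshold $\frac{12}{5(k+2)}$ equal to $3/5$ and simultaneously meets the standing hypothesis $k \geq 2$. First I would observe that at $k = 2$ the two weights in (\ref{Yang3h}) coincide, because $\frac{k}{k+2} = \frac{2}{k+2} = \frac{1}{2}$. With these equal weights the chain (\ref{Yang3h}) becomes
\begin{equation*}
\tfrac{1}{2}\left( \tfrac{\sinh x}{x}\right) ^{2p}+\tfrac{1}{2}\left( \tfrac{\tanh x}{x}\right) ^{p}>\tfrac{1}{2}\left( \tfrac{x}{\sinh x}\right) ^{2p}+\tfrac{1}{2}\left( \tfrac{x}{\tanh x}\right) ^{p}>1 .
\end{equation*}
Multiplying every member of this chain by $2$ turns it into precisely (\ref{Zhwh}), with the right-hand bound $>1$ rescaled to $>2$. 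Because multiplication by the positive constant $2$ is an equivalence, the chain (\ref{Yang3h}) at $k=2$ holds for all $x\in\left( 0,\infty \right) $ if and only if (\ref{Zhwh}) does.

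Next I would carry out the threshold computation. At $k = 2$ one has $\frac{12}{5(k+2)} = \frac{12}{20} = \frac{3}{5}$, and the hypothesis $k \geq 2$ of Proposition \ref{PhZhug} is satisfied with equality. Consequently the necessary and sufficient criterion ``$k \geq 2$ and $p \geq \frac{12}{5(k+2)}$'' furnished by Proposition \ref{PhZhug} collapses to the single requirement $p \geq 3/5$, which is exactly the assertion of the corollary.

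No analytic obstacle arises: the entire substance of the result already resides in Proposition \ref{PhZhug} (and, behind it, in Theorem \ref{Main 3} and Lemma \ref{Lemma EF/G}). The only steps to verify are the elementary bookkeeping facts — the coincidence of the two weights at $k = 2$ and the evaluation of the threshold — both of which are immediate. Thus I expect the proof to be a short, purely formal specialization rather than an argument requiring any new estimate.
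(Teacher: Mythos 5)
Your proposal is correct and takes essentially the same route as the paper, whose entire proof is the one-line remark ``Letting $k=2$ in Proposition \ref{PhZhug} we have'' the corollary: at $k=2$ both weights in (\ref{Yang3h}) equal $\tfrac{1}{2}$, multiplying through by $2$ gives exactly (\ref{Zhwh}), and the threshold $\tfrac{12}{5(k+2)}$ evaluates to $3/5$ with the hypothesis $k\geq 2$ met with equality. You have simply spelled out the bookkeeping the paper leaves implicit, which is fine.
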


\begin{remark}
Clearly, Corollaries \ref{Ctyang3} and \ref{Chyang3} offer another method
for solving the problems posed by Zhu in \cite{Zhu.AAA.2009}.
\end{remark}

\subsection{Other Wilker type inequalities}

Taking $k=3,4$ in Theorems \ref{Main 1} and \ref{Main 2}, we obtain the
following

\begin{proposition}
\label{Ptk=3}For $x\in \left( 0,\pi /2\right) $, inequality 
\begin{equation}
\frac{2}{5}\left( \frac{\sin x}{x}\right) ^{3p}+\frac{3}{5}\left( \frac{\tan
x}{x}\right) ^{p}>1  \label{Pt3}
\end{equation}%
holds if and only if $p>0$ or $p\leq -\frac{\ln 5-\ln 2}{3\left( \ln \pi
-\ln 2\right) }\approx -0.676$. It is reversed if and only if $-12/25\leq
p<0 $.
\end{proposition}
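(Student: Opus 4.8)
The plan is to derive Proposition \ref{Ptk=3} as nothing more than the special case $k=3$ of the two general theorems already in hand. First I would substitute $k=3$ into inequality (\ref{Mt}). Since then $k+2=5$, so that $\tfrac{2}{k+2}=\tfrac{2}{5}$ and $\tfrac{k}{k+2}=\tfrac{3}{5}$, and $kp=3p$, the left-hand side of (\ref{Mt}) becomes exactly $\tfrac{2}{5}\left(\tfrac{\sin x}{x}\right)^{3p}+\tfrac{3}{5}\left(\tfrac{\tan x}{x}\right)^{p}$, which is the left-hand side of (\ref{Pt3}). Because $k=3\geq 1$, the hypothesis of Theorem \ref{Main 1} is met, and that theorem applies verbatim: (\ref{Pt3}) holds on $\left(0,\pi/2\right)$ if and only if $p>0$ or $p\leq -\frac{\ln\left(k+2\right)-\ln 2}{k\left(\ln\pi-\ln 2\right)}$.

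The remaining task for the forward direction is purely arithmetic: I would evaluate the threshold $-\frac{\ln\left(k+2\right)-\ln 2}{k\left(\ln\pi-\ln 2\right)}$ at $k=3$, obtaining $-\frac{\ln 5-\ln 2}{3\left(\ln\pi-\ln 2\right)}$, and a short numerical check ($\ln 5-\ln 2\approx 0.916$ and $3\left(\ln\pi-\ln 2\right)\approx 1.355$) confirms the stated value $\approx -0.676$. For the reversed inequality (\ref{Pt3}) I would instead invoke Theorem \ref{Main 2}, once more with $k=3\geq 1$; that theorem states the reverse of (\ref{Mt}) holds if and only if $-\frac{12}{5\left(k+2\right)}\leq p<0$, and inserting $k=3$ gives $-\frac{12}{25}\leq p<0$, precisely the asserted range.

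There is essentially no obstacle to surmount here, since the entire analytic content---the monotonicity of $C/\left(kA+B\right)$ and its sharp bounds from Lemma \ref{Lemma AB/C}, together with the ordering of constants in Lemma \ref{Lemma 3}---has already been absorbed into the proofs of Theorems \ref{Main 1} and \ref{Main 2}. The only point deserving a moment's attention is to confirm that the two general constants $\frac{\ln\left(k+2\right)-\ln 2}{k\left(\ln\pi-\ln 2\right)}$ and $\frac{12}{5\left(k+2\right)}$ collapse correctly to $\frac{\ln 5-\ln 2}{3\left(\ln\pi-\ln 2\right)}$ and $\frac{12}{25}$ at $k=3$; both specializations are immediate, so the proposition follows at once.
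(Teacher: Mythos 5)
Your proposal is correct and takes exactly the same route as the paper: Proposition \ref{Ptk=3} is stated there as the immediate specialization $k=3$ of Theorems \ref{Main 1} and \ref{Main 2}, with no further argument given. Your evaluations of the two thresholds, $-\frac{\ln 5-\ln 2}{3\left( \ln \pi -\ln 2\right) }\approx -0.676$ and $-\frac{12}{25}$, are the only computations required and both are right.
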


\begin{proposition}
\label{Ptk=4}For $x\in \left( 0,\pi /2\right) $, inequality 
\begin{equation}
\frac{1}{3}\left( \frac{\sin x}{x}\right) ^{4p}+\frac{2}{3}\left( \frac{\tan
x}{x}\right) ^{p}>1  \label{Pt4}
\end{equation}%
holds if and only if $p>0$ or $p\leq -\frac{\ln 3}{4\left( \ln \pi -\ln
2\right) }\approx -0.608$. It is reversed if and only if $-2/5\leq p<0$.
\end{proposition}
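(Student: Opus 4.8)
The plan is to obtain this proposition directly as the special case $k=4$ of Theorems \ref{Main 1} and \ref{Main 2}, both of which apply since $k=4$ satisfies the standing hypothesis $k\geq 1$. First I would substitute $k=4$ into inequality (\ref{Mt}): the two weights become $\frac{2}{k+2}=\frac{2}{6}=\frac{1}{3}$ and $\frac{k}{k+2}=\frac{4}{6}=\frac{2}{3}$, while the exponent on $\sin x/x$ becomes $kp=4p$. This reproduces exactly the left-hand side of (\ref{Pt4}), so the forward inequality in this proposition coincides with (\ref{Mt}) at $k=4$, and its validity is governed verbatim by Theorem \ref{Main 1}.

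Next I would read off the characterization from Theorem \ref{Main 1}: the inequality holds for $x\in\left(0,\pi/2\right)$ if and only if $p>0$ or $p\leq -\frac{\ln(k+2)-\ln 2}{k(\ln\pi-\ln 2)}$. Setting $k=4$ gives the threshold $-\frac{\ln 6-\ln 2}{4(\ln\pi-\ln 2)}$, and the logarithm identity $\ln 6-\ln 2=\ln 3$ simplifies this to $-\frac{\ln 3}{4(\ln\pi-\ln 2)}$. An elementary numerical evaluation then yields the stated approximation $\approx -0.608$.

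For the reversal claim I would invoke Theorem \ref{Main 2}, which states that the reverse of (\ref{Mt}) holds for $x\in\left(0,\pi/2\right)$ if and only if $-\frac{12}{5(k+2)}\leq p<0$. Substituting $k=4$ gives $-\frac{12}{30}\leq p<0$, that is $-\frac{2}{5}\leq p<0$, exactly the condition asserted in the proposition.

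I do not anticipate any substantive obstacle: the proposition is a pure specialization, and the entire proof amounts to verifying the two weight evaluations $2/(k+2)$ and $k/(k+2)$, the two threshold evaluations at $k=4$, and the single logarithm simplification $\ln 6-\ln 2=\ln 3$. The only hypothesis to check is the applicability of both theorems, which follows immediately from $4\geq 1$.
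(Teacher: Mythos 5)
Your proposal is correct and is precisely the paper's own argument: the paper derives Proposition \ref{Ptk=4} by setting $k=4$ in Theorems \ref{Main 1} and \ref{Main 2}, exactly as you do, with the same weight evaluations $2/(k+2)=1/3$, $k/(k+2)=2/3$, the simplification $\ln 6-\ln 2=\ln 3$, and the threshold $-\tfrac{12}{5(k+2)}=-\tfrac{2}{5}$. Nothing further is needed.
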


Putting $k=-3,-4$ in Theorem \ref{Main 3} we get

\begin{proposition}
\label{Phk=-3}For $x\in \left( 0,\infty \right) $, inequality 
\begin{equation}
\left( \frac{\tanh x}{x}\right) ^{p}<\frac{2}{3}\left( \frac{x}{\sinh x}%
\right) ^{3p}+\frac{1}{3}  \label{Ph-3}
\end{equation}%
holds if and only if $p<0$ or $p\geq 12/5$.
\end{proposition}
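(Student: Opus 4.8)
The plan is to obtain (\ref{Ph-3}) as the special case $k=-3$ of Theorem \ref{Main 4}, since $-3<-2$ places us precisely in the regime $k<-2$ covered by that theorem (rather than Theorem \ref{Main 3}, whose hypothesis is $k\geq 1$). First I would substitute $k=-3$ into the reverse hyperbolic inequality (\ref{Mhr}). The coefficients specialize to $\frac{2}{k+2}=\frac{2}{-1}=-2$ and $\frac{k}{k+2}=\frac{-3}{-1}=3$, while the exponent $kp$ becomes $-3p$, so (\ref{Mhr}) reads
\begin{equation*}
-2\left( \frac{\sinh x}{x}\right) ^{-3p}+3\left( \frac{\tanh x}{x}\right) ^{p}<1.
\end{equation*}

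Next I would rewrite $\left( \frac{\sinh x}{x}\right) ^{-3p}=\left( \frac{x}{\sinh x}\right) ^{3p}$ and rearrange. Adding $2\left( \frac{x}{\sinh x}\right) ^{3p}$ to both sides and then dividing by the positive constant $3$ preserves the direction of the inequality and yields exactly (\ref{Ph-3}). Because only addition and multiplication by a positive constant are involved, no reversal of the inequality sign occurs, even though the leading coefficient $\frac{2}{k+2}$ is negative at $k=-3$; this is the single point in the argument that warrants a careful check.

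Finally, the admissible range of $p$ transfers verbatim from Theorem \ref{Main 4}, which asserts that (\ref{Mhr}) holds for $x\in \left( 0,\infty \right) $ if and only if $p<0$ or $p\geq -\frac{12}{5\left( k+2\right) }$. Evaluating the threshold at $k=-3$ gives $-\frac{12}{5\left( k+2\right) }=-\frac{12}{5\cdot \left( -1\right) }=\frac{12}{5}$, so the condition becomes $p<0$ or $p\geq 12/5$, which is precisely the claim. I do not anticipate any genuine obstacle: the whole derivation is a specialization of an already-established theorem combined with elementary algebra, so the only real work is the bookkeeping of the coefficients and the evaluation of the threshold. (The companion case $k=-4$ alluded to in the surrounding text would be handled identically, substituting $\frac{2}{k+2}=-1$, $\frac{k}{k+2}=2$, and threshold $-\frac{12}{5\left( -2\right) }=\frac{6}{5}$.)
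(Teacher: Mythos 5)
Your proof is correct and is essentially the paper's own argument: the paper obtains Proposition \ref{Phk=-3} precisely by specializing its main hyperbolic theorem to $k=-3$ and doing the coefficient bookkeeping you describe, with no additional ideas. One small point in your favor: the paper's text says ``Putting $k=-3,-4$ in Theorem \ref{Main 3}'', which is evidently a typo for Theorem \ref{Main 4} (the $k<-2$ case), and your identification of Theorem \ref{Main 4} as the correct source --- together with the check that adding $2\left(\frac{x}{\sinh x}\right)^{3p}$ and dividing by $3$ never reverses the inequality despite the negative coefficient $\frac{2}{k+2}=-2$ --- is exactly the right reading.
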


\begin{proposition}
\label{Phk=-4}For $x\in \left( 0,\pi /2\right) $, inequality 
\begin{equation}
2\left( \frac{\tanh x}{x}\right) ^{p}<\left( \frac{x}{\sinh x}\right) ^{4p}+1
\label{Ph-4}
\end{equation}%
holds if and only if $p<0$ or $p\geq 6/5.$
\end{proposition}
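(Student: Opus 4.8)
The plan is to derive this directly from Theorem \ref{Main 4} by specializing to $k=-4$, which is admissible since $-4<-2$. First I would substitute $k=-4$ into the reverse hyperbolic inequality (\ref{Mhr}), computing the weights $2/(k+2)=2/(-2)=-1$ and $k/(k+2)=(-4)/(-2)=2$. This turns (\ref{Mhr}) into
\[
-\left(\frac{\sinh x}{x}\right)^{-4p}+2\left(\frac{\tanh x}{x}\right)^{p}<1 .
\]

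Next, using $\left(\sinh x/x\right)^{-4p}=\left(x/\sinh x\right)^{4p}$ and moving this term to the right-hand side, I obtain exactly the target inequality (\ref{Ph-4}); thus (\ref{Ph-4}) is literally (\ref{Mhr}) with $k=-4$, posed on the interval $\left(0,\infty\right)$ of Theorem \ref{Main 4}.

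Finally I would translate the admissible range of $p$ supplied by Theorem \ref{Main 4}. For $k=-4$ the threshold value is $-\frac{12}{5(k+2)}=-\frac{12}{5\cdot(-2)}=\frac{6}{5}$, so Theorem \ref{Main 4} asserts that the reverse inequality holds if and only if $p<0$ or $p\geq\frac{6}{5}$, which is precisely the claimed condition.

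Since all the analytic work---the sign of $G$, the monotonicity of $G/(kE+F)$, and the two-sided bound (\ref{REF/G})---has already been carried out in Lemma \ref{Lemma EF/G} and packaged into Theorem \ref{Main 4}, there is no genuine obstacle here. The only point demanding care is the sign bookkeeping forced by the negative $k$: both weights $2/(k+2)$ and $k/(k+2)$ change character (one becomes negative and the other exceeds one), and it is exactly this that converts the single power term in (\ref{Mhr}) into the additive form displayed in (\ref{Ph-4}).
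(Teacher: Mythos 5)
Your proposal is correct and matches the paper's own route: the paper obtains Proposition \ref{Phk=-4} precisely by substituting $k=-4$ into the reverse hyperbolic inequality (\ref{Mhr}) (note the paper's text mistakenly cites Theorem \ref{Main 3}, which only covers $k\geq 1$; the intended and correct reference is Theorem \ref{Main 4}, exactly as you use). Your sign bookkeeping for the weights, the rewriting of $\left(\sinh x/x\right)^{-4p}$ as $\left(x/\sinh x\right)^{4p}$, and the threshold computation $-\tfrac{12}{5(k+2)}=\tfrac{6}{5}$ are all accurate, and you also correctly place the inequality on $\left(0,\infty\right)$ rather than the interval $\left(0,\pi/2\right)$ misprinted in the proposition's statement.
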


\end{document}